\documentclass[onefignum,onetabnum]{siamart171218}



\usepackage{lipsum}
\usepackage{amsfonts}
\usepackage{graphicx}
\usepackage{epstopdf}
\usepackage{algorithmic}
\usepackage{caption}
\usepackage{subcaption}
\usepackage[nocompress]{cite}

\ifpdf
  \DeclareGraphicsExtensions{.eps,.pdf,.png,.jpg}
\else
  \DeclareGraphicsExtensions{.eps}
\fi


\newsiamremark{remark}{Remark}
\newsiamremark{example}{Example}
\newsiamremark{assumption}{Assumption}
\newsiamremark{hypothesis}{Hypothesis}
\crefname{hypothesis}{Hypothesis}{Hypotheses}
\newsiamthm{claim}{Claim}

\headers{Contraction Rates for discretized Kinetic Langevin dynamics}{}

\title{Contraction and Convergence Rates for discretized Kinetic Langevin dynamics}{}

\author{Benedict Leimkuhler\thanks{Department of Mathematics, University of Edinburgh.,
Edinburgh, EH9 3FD, UK. (\email{b.leimkuhler@ed.ac.uk}, \email{dpaulin@ed.ac.uk}, \email{p.a.whalley@sms.ed.ac.uk})}
\and Daniel Paulin\footnotemark[1]
\and Peter A. Whalley\footnotemark[1]}

\usepackage{amsopn}

\makeatletter
\newcommand*{\addFileDependency}[1]{
  \typeout{(#1)}
  \@addtofilelist{#1}
  \IfFileExists{#1}{}{\typeout{No file #1.}}
}
\makeatother


\ifpdf
\hypersetup{
  pdftitle={Contraction and Convergence Rates for Discretized Kinetic Langevin Dynamics},
  pdfauthor={}
}
\fi{}


\usepackage{todonotes}


\begin{document}

\maketitle

\begin{abstract}
We provide a framework to analyze the convergence of discretized kinetic Langevin dynamics for $M$-$\nabla$Lipschitz, {$m$-convex potentials}. Our approach gives convergence rates of $\mathcal{O}(m/M)$, with explicit stepsize restrictions, which are of the same order as the stability threshold for Gaussian targets and are valid for a large interval of the friction parameter. We apply this methodology to various integration schemes which are popular in the molecular dynamics and machine learning communities. Further, we introduce the property ``$\gamma$-limit convergent" (GLC) to characterize underdamped Langevin schemes that converge to overdamped dynamics in the high-friction limit and which have stepsize restrictions that are independent of the friction parameter; we show that this property is not generic by exhibiting methods from both the class and its complement.  {Finally, we provide asymptotic bias estimates for the BAOAB scheme, which remain accurate in the high-friction limit by comparison to a modified stochastic dynamics which preserves the invariant measure.}
\end{abstract}

\begin{keywords}
    Wasserstein convergence, kinetic Langevin dynamics, underdamped Langevin dynamics, MCMC sampling.
\end{keywords}

\begin{AMS}
 65C05, 	65C30, 65C40 
\end{AMS}
\section{Introduction}
In this article, we study the following form of the Langevin dynamics equations (``kinetic Langevin dynamics") on $\mathbb{R}^{2d}$:
\begin{equation}\label{eq:underdamped_langevin}
\begin{split}
dX_{t} &= V_{t}dt,\\
dV_{t} &= -\nabla U(X_t) dt - \gamma V_{t}dt + \sqrt{2\gamma}dW_t,  
\end{split}
\end{equation}
where $U:\mathbb{R}^{d}\to \mathbb{R}$ is the potential energy, $\gamma > 0$ is a friction parameter,  and $W_{t}$ is $d$-dimensional standard Brownian motion.  Under mild conditions, it can be shown that the system \eqref{eq:underdamped_langevin} has an invariant measure $\pi$ with density proportional to $\exp(-U(X)-||V||^2/2)$ \cite{pavliotis2014stochastic}. Normally, Langevin dynamics is developed in the physical setting with additional parameters representing temperature and mass.  However, our primary aim in using (\ref{eq:underdamped_langevin}) is, ultimately, the computation of statistical averages involving only the position $X$, and in such situations, both parameters can be neglected without any loss of generality, or alternatively incorporated into our results through suitable rescalings of time and potential energy.  In this article, we focus on the properties of (\ref{eq:underdamped_langevin}) in relation to numerical discretization and variation of the friction coefficient.

Taking the limit as $\gamma \to \infty$ in (\ref{eq:underdamped_langevin}), and introducing a suitable time-rescaling ($t' =\gamma t$) results in the overdamped Langevin dynamics given by (see \cite{pavliotis2014stochastic}[Sec 6.5])
\begin{equation}\label{eq:overdamped}
dX_{t}  = -\nabla U(X_t) dt + \sqrt{2}dW_t.
\end{equation}
This equation has, again, a unique invariant measure with density proportional to $\exp(-U(x))$. Under the assumption of a Poincar\'e inequality, convergence rate guarantees can be established for the continuous dynamics \cite{bakry2014analysis}. In the case of kinetic dynamics, a more delicate argument is needed to establish exponential convergence, due to the hypoelliptic nature of the SDE (see \cite{cao2019explicit,villani2006hypocoercivity,dolbeault2015hypocoercivity,dolbeault2009hypocoercivity,bakry2006diffusions,baudoin2016wasserstein,baudoin2017bakry}). 

Langevin dynamics, in its kinetic and overdamped forms, is the basis of many widely used sampling algorithms in machine learning and statistics \cite{cheng2018underdamped,welling2011bayesian,vollmer2016exploration}. In sampling, Langevin dynamics is discretized and the individual timesteps generated by integration are viewed as approximate draws from the target distribution. The discretization with a step size $h>0$ will have an invariant measure $\pi_{h}$ on $\mathbb{R}^{2d}$. However, there is an inherent bias due to the finite difference numerical approximation $(\pi \neq \pi_{h})$. This bias is usually addressed by choosing a sufficiently small stepsize, or by adding bias correction by methods like Metropolis-Hastings adjustment. The choice of the discretization method has a significant effect on the quality of the samples and also on the computational cost of producing accurate samples, through stability properties, convergence rates, and asymptotic bias.

Overdamped Langevin dynamics has been heavily studied both in the continuous and the discretized settings, with popular integrators being the Euler-Maruyama and the limit method of the BAOAB scheme (see Section 4 of \cite{leimkuhler2013rational}). The kinetic Langevin system has been extensively studied in the continuous case, but there are still many open questions about the design of the numerical integrator. A metric that is typically used to quantify the performance of a sampling scheme is the number of steps required to reach a certain level of accuracy in Wasserstein distance. Non-asymptotic bounds in Wasserstein distance reflect computational complexity, convergence rate, and accuracy. Achieving such bounds relies on two steps: (1) determining explicit convergence rates of the process to its invariant measure and (2) proving non-asymptotic bias estimates for the invariant measure.

Discretized kinetic Langevin dynamics, without correction, is widely used, for example in molecular dynamics \cite{allen2017computer,frenkel2023understanding,leimkuhler2013rational}, where it is known to provide accurate approximations of physical quantities, even for complex, high dimensional potentials.  Metropolization is rarely used in this area, as it tends to slow down convergence by several orders of magnitude \cite{PhysRevE.75.056707}. Convergence properties of unadjusted Langevin dynamics were rigorously studied for the first time in the paper \cite{roberts1996exponential}. Further analysis of convergence of unadjusted methods is particularly important within the context of Multilevel Monte Carlo methods \cite{majka2020non}.

The analysis of the discretization bias requires different techniques and is available in the literature for many of the integrators we consider \cite{cheng2018underdamped,monmarche2021high,sanz2021wasserstein}. Where available, bias analysis can be combined with our contraction results to provide non-asymptotic guarantees. {In this paper, we also provide new asymptotic bias estimates for a scheme which remain finite in the high-friction limit, by comparing the scheme to an alternative scheme which exactly preserves the invariant measure. This scheme switches between exact Hamiltonian dynamics and Ornstein–Uhlenbeck process steps and we believe this technique can be extended to other integrators of this type.}

The approach that we use to obtain convergence rates is based on proving contraction for a  synchronous coupling, as in \cite{monmarche2021high,dalalyan2020sampling}. Proving contraction of a coupling has been a popular method for establishing convergence both in the continuous time setting and for the discretization for Langevin dynamics and Hamiltonian Monte Carlo (\cite{eberle2019couplings,bou2020coupling,deligiannidis2021randomized,bou2023mixing,bou2022couplings,riou2022metropolis,schuh2022global,bou2022unadjusted}), since a consequence of such a contraction is convergence in Wasserstein distance (viewed as the infimum over all possible couplings with respect to some norm). Synchronous coupling has been a popular means of achieving explicit convergence rates for discretizations due to its simplicity.

There have been other recent work aimed at providing convergence rates for kinetic Langevin dynamics under explicit restrictions on the parameters (\cite{cheng2018underdamped,dalalyan2020sampling,monmarche2021high,monmarche2022hmc}), but these guarantees are valid only with sharp restrictions on stepsize. There has also been the work of \cite{sanz2021wasserstein} which considers a slightly different version of the SDE (\ref{eq:underdamped_langevin}), where time is rescaled depending on the smallest and largest eigenvalues of the Hessian to optimize contraction rates and bias. We have included their results in Table \ref{Table:results} after converting them into our framework using \cite{dalalyan2020sampling}[Lemma 1]. The results of \cite{sanz2021wasserstein} rely on a stepsize restriction of $\mathcal{O}(1/\gamma)$, but their analysis does not provide the stepsize threshold \cite{sanz2021wasserstein}[Example 9], and the class of schemes considered is different, with only the stochastic Euler scheme in common. {Further, the techniques they use to quantify the asymptotic bias are not appropriate for many of the schemes considered in this article, as they are designed for high strong-order numerical integrators. For example, the BAOAB scheme is only strong order one but has asymptotic bias of order two, which cannot be estimated by their approach.}
Other works on contraction of kinetic Langevin and its discretization include \cite{foster2021shifted, dalalyan2022bounding,zhang2023improved}.

In the current article, we apply direct convergence analysis to various popular integration methods and provide a general framework for establishing convergence rates of kinetic Langevin dynamics with tight explicit stepsize restrictions of $\mathcal{O}\left(1/\gamma\right)$ or $\mathcal{O}(1/\sqrt{M})$ (depending on the scheme). As a consequence, we improve the contraction rates significantly for many of the available algorithms (see Table \ref{Table:results}). For a specific class of schemes, we establish explicit bounds on the convergence rate for stepsizes of $\mathcal{O}(1/\sqrt{M})$. In the limit of large friction, we distinguish two types of integrators -- those that converge to overdamped dynamics (``$\gamma$-limit-convergent") and those that do not.   We demonstrate with examples that this property is not universal: some seemingly reasonable methods have the property that the convergence rate falls to zero in the $\gamma\rightarrow \infty$ limit. This is verified numerically and analytically for an anisotropic Gaussian target. {Further, our novel asymptotic bias estimates for the BAOAB scheme demonstrate accuracy in the high-friction limit.}

The remainder of this article is structured as follows. We first introduce overdamped Langevin dynamics, the Euler-Maruyama (EM) and the high-friction limit of BAOAB (LM) and discuss their convergence guarantees. Next, we introduce kinetic Langevin dynamics and describe various popular discretizations, and give our results on convergence guarantees with mild stepsize assumptions. These schemes include first and second-order splittings and the stochastic Euler scheme (SES). Further, we compare the results of overdamped Langevin and kinetic Langevin and show how schemes like BAOAB and OBABO exhibit the positive qualities of both cases with the ``$\gamma$-limit convergent" property, whereas schemes like EM and SES do not perform well for a large range of $\gamma$. {Finally, we give asymptotic bias estimates for the BAOAB scheme which support the ``$\gamma$-limit convergent" property.}

\begin{table}
\begin{center}
\begin{tabular}{ |c|c|c|c|c } 
\hline
Algorithm & stepsize restriction & optimal one-step contraction rate \\
\hline
EM & $\mathcal{O}(1/\gamma)$ & $\mathcal{O}(m/M)$\\ 
BAO, OBA, AOB  &$\mathcal{O}(1/\sqrt{M})$   & $\mathcal{O}(m/M)$\\
OAB, ABO, BOA &$\mathcal{O}(1/\gamma)$  & $\mathcal{O}(m/M)$\\
BAOAB &$\mathcal{O}(1/\sqrt{M})$ & $\mathcal{O}(m/M)$ \\ 
OBABO &$\mathcal{O}(1/\sqrt{M})$ & $\mathcal{O}(m/M)$ \\ 
SES &$\mathcal{O}(1/\gamma)$ & $\mathcal{O}(m/M)$\\
\hline
\end{tabular}

\vspace{5mm}
\begin{tabular}{ |c|c|c| } 
\hline
Algorithm & previous stepsize restriction & previous explicit best rate\\
\hline
OBABO &$\mathcal{O}(m/\gamma^{3})$ & $\mathcal{O}(m^{2}/M^{2})$ \cite{monmarche2021high} \\ 
SES &$\mathcal{O}(1/\gamma)$ & $\mathcal{O}(m/M)$ \cite{sanz2021wasserstein}\\
\hline
\end{tabular}
\end{center}
\caption{The first table provides our stepsize restrictions and optimal contraction rates of the discretized kinetic Langevin dynamics. The second provides the previous best results. There are no previous results regarding the EM scheme, the first order splittings and BAOAB to the best of our knowledge.}
\label{Table:results}
\end{table}

\section{Assumptions and definitions}

\subsection{Assumptions on $U$} \label{sec:assumptions}
We will make the following assumptions on the potential $U:\mathbb{R}^{d}\to \mathbb{R}$.

\begin{assumption}[$M$-$\nabla$Lipschitz]\label{assum:G_Lipschitz} $U$ is twice continuously differentiable and there exists a $M > 0$ such that for all $X, Y \in \mathbb{R}^{d}$
\[
\left|\nabla U\left(X\right) - \nabla U\left(Y\right)\right| \leq M \left|X-Y\right|.
\]
\end{assumption}

\begin{assumption}[$m$-convexity]\label{assum:convex}
$U$ is continuously differentiable and there exists a $m > 0$ such that for all $X,Y \in \mathbb{R}^{d}$
\[
\left\langle \nabla U(X) - \nabla U(Y),X-Y \right\rangle \geq m \left|X-Y\right|^{2}.
\]
\end{assumption}

\begin{assumption}[$M_{1}$-Hessian Lipschitz]\label{assum:H_Lipschitz} $U$ is three times continuously differentiable and there exists a $M_{1} > 0$ such that for all $X, Y \in \mathbb{R}^{d}$
\[
\left|\nabla^{2} U\left(X\right) - \nabla^{2} U\left(Y\right)\right| \leq M_{1} \left|X-Y\right|.
\]
\end{assumption}

The first two assumptions are popular conditions used to obtain explicit convergence rates, see \cite{dalalyan2017theoretical,dalalyan2020sampling} for example. It is worth mentioning that these assumptions can also produce explicit convergence rates for gradient descent \cite{boyd2004convex}. {The final assumption is only used for proving higher order asymptotic bias estimates in Section \ref{sec:bias}}.

\subsection{Modified Euclidean Norms}\label{Sec:Quadratic_Norm}
For kinetic Langevin dynamics, it is not possible to prove convergence with respect to the standard Euclidean norm due to the fact that the generator is hypoelliptic. We therefore work with a modified Euclidean norm as in \cite{monmarche2021high}.
For $z = (x,v) \in \mathbb{R}^{2d}$ we introduce the weighted Euclidean norm
\[
\left|\left| z \right|\right|^{2}_{a,b} = \left|\left| x \right|\right|^{2} + 2b \left\langle x,v \right\rangle + a \left|\left| v \right|\right|^{2},
\]
for $a,b > 0$, which is equivalent to the Euclidean norm on $\mathbb{R}^{2d}$ as long as $b^{2}<a$. Under the condition $b^2<a/4$, we have
\[
\frac{1}{2}||z||^{2}_{a,0} \leq ||z||^{2}_{a,b} \leq \frac{3}{2}||z||^{2}_{a,0}.\]

\subsection{Wasserstein Distance} \label{sec:wasserstein_def}

We define $\mathcal{P}_{p}\left(\mathbb{R}^{2d}\right)$ to be the set of probability measures which have finite $p$-th moment, then for $p \in \left[0,\infty\right)$ we define the $p$-Wasserstein distance on this space. Let $\mu$ and $\nu$ be two probability measures. We define the $p$-Wasserstein distance between $\mu$ and $\nu$ with respect to the norm $||\cdot||_{a,b}$ (introduced in Section \ref{Sec:Quadratic_Norm}) to be 
\[\mathcal{W}_{p,a,b}\left(\nu,\mu\right) = \left( \inf_{\xi \in \Gamma\left( \nu, \mu \right)}\int_{\mathbb{R}^{2d}}||z_{1} - z_{2}||^{p}_{a,b}d\xi\left(z_{1},z_{2}\right)\right)^{1/p},\]
where $\Gamma\left(\mu,\nu\right)$ is the set of measures with marginals $\mu$ and $\nu$ (the set of all couplings between $\mu$ and $\nu$).

It is well known that the existence of couplings with a contractive property implies convergence in Wasserstein distance (which can be interpreted as the infimum over all such couplings). The simplest such coupling is to consider simulations with common noise, this is known as synchronous coupling, therefore if one can show contraction of two simulations which share noise increments with an explicit contraction rate. Then one has convergence in Wasserstein distance with the same rate. With all the constants and conditions derived for all the schemes for contraction, we have convergence in Wasserstein distance by the following proposition:
\begin{proposition}\label{prop:Wasserstein}
Assume a numerical scheme for kinetic Langevin dynamics with a $m$-strongly convex $M$-$\nabla$Lipschitz potential $U$ and transition kernel $P_{h}$. {Let $\left(x_{n},v_{n}\right)$ and $\left(\Tilde{x}_{n},\Tilde{v}_{n}\right)$ be} two synchronously coupled chains  of the numerical scheme {that} have the contraction property 
\begin{equation}\label{eq:contraction_inequality}
    ||(x_{n} - \Tilde{x}_{n},v_{n} - \Tilde{v}_{n})||^{2}_{a,b} \leq C(1 - c\left(h\right))^{n}||(x_{0} - \Tilde{x}_{0},v_{0} - \Tilde{v}_{0})||^{2}_{a,b},
\end{equation}
for $\gamma^{2} \geq C_{\gamma}M$ and $h \leq C_{h}\left(\gamma,\sqrt{M}\right)$ for some $a,b >0$ such that {$b^{2} < a/4$}. Then we have that for all $\gamma^{2} \geq C_{\gamma}M$, $h \leq C_{h}\left(\gamma,\sqrt{M}\right)$, $1 \leq p \leq \infty$ and all $\mu,\nu \in \mathcal{P}_{p}(\mathbb{R}^{2d})$, and  all $n \in \mathbb{N}$,
\[
\mathcal{W}^{2}_{p}\left(\nu P^n_{h} ,\mu P^n_{h} \right) \leq 3C\max{\left\{a,\frac{1}{a}\right\}}\left(1 - c\left(h\right)\right)^{n} \mathcal{W}^{2}_{p}\left(\nu,\mu\right).
\]
Further to this, $P_{h}$ has a unique invariant measure which depends on the stepsize, $\pi_{h}$, where $\pi_{h} \in \mathcal{P}_{p}(\mathbb{R}^{2d})$ for all $1 \leq p \leq \infty$.
\end{proposition}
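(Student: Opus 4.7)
The strategy has two clearly separated stages: first, upgrade the pointwise synchronous-coupling contraction (\ref{eq:contraction_inequality}) from the skewed norm $\|\cdot\|_{a,b}$ to a $\mathcal{W}_p$ contraction in the standard Euclidean norm on $\mathbb{R}^{2d}$; second, use that contraction, together with completeness of $(\mathcal{P}_p,\mathcal{W}_p)$, to produce and uniquely identify the invariant measure $\pi_h$.

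For the first stage, I would combine the bound $\frac{1}{2}\|z\|_{a,0}^2\le\|z\|_{a,b}^2\le\frac{3}{2}\|z\|_{a,0}^2$ from Section~\ref{Sec:Quadratic_Norm} with the trivial estimate $\min\{1,a\}\|z\|^2\le\|z\|_{a,0}^2\le\max\{1,a\}\|z\|^2$ to obtain the two-sided equivalence $\frac{\min\{1,a\}}{2}\|z\|^2\le\|z\|_{a,b}^2\le\frac{3\max\{1,a\}}{2}\|z\|^2$, whose ratio is exactly $3\max\{a,1/a\}$. Picking any coupling $\xi_0\in\Gamma(\nu,\mu)$ that attains the Euclidean $p$-Wasserstein distance, drawing $(Z_0,\tilde Z_0)\sim\xi_0$, and propagating the two chains synchronously (shared Gaussian noise at every step) under $P_h$ produces a joint law on $(Z_n,\tilde Z_n)$ that couples $\nu P_h^n$ and $\mu P_h^n$. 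Sandwiching (\ref{eq:contraction_inequality}) between the two equivalence constants gives almost surely $\|Z_n-\tilde Z_n\|^2\le 3C\max\{a,1/a\}(1-c(h))^n\|Z_0-\tilde Z_0\|^2$, after which taking $L^{p/2}$ norms (the essential supremum for $p=\infty$) and infimizing over $\xi_0$ yields the claimed Wasserstein inequality.

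For the second stage, applying the first-stage contraction to $\nu=\delta_{z_0}$ and $\mu=\delta_{z_0}P_h^\ell$ for a fixed reference point $z_0$ and arbitrary $\ell\ge 0$ shows that $\{\delta_{z_0}P_h^n\}_{n\ge 0}$ is $\mathcal{W}_p$-Cauchy; completeness of $(\mathcal{P}_p,\mathcal{W}_p)$ then yields a limit $\pi_h$, which is invariant because $P_h$ is $\mathcal{W}_p$-Lipschitz by stage one, and which is unique because any two invariant measures $\pi_h,\pi_h'$ satisfy $\mathcal{W}_p(\pi_h,\pi_h')=\mathcal{W}_p(\pi_h P_h^n,\pi_h' P_h^n)\to 0$.

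The nontrivial ingredient, and where I expect the real work to lie, is verifying that $\pi_h\in\mathcal{P}_p$ for \emph{every} $p\in[1,\infty]$, so that the Cauchy argument genuinely runs in the target space and the limit has all finite moments. The clean route is a scheme-specific one-step moment bound of the form $\mathbb{E}\|Z_1\|^p\le K_1\|z_0\|^p+K_2$ (straightforward, since each update is an affine map of $(X,V)$ plus a Gaussian of controlled variance), iterated against the contraction to give $\sup_n\mathbb{E}_{z_0}\|Z_n\|^p<\infty$, and then transferred to $\pi_h$ using weak lower semicontinuity of $z\mapsto\|z\|^p$. The boundary case $p=\infty$ is delicate because Gaussian noise destroys bounded support after a single step, so $\mathcal{P}_\infty$ must be interpreted via $\mathcal{W}_\infty$ balls around a reference measure with finite exponential moments rather than as the space of compactly-supported measures.
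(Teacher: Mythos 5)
Your proposal is correct, and it reconstructs essentially the argument behind the citation the paper gives: the paper's own ``proof'' is just a pointer to \cite{monmarche2021high}[Corollary~20], which in turn appeals to \cite{villani2009optimal}[Corollary~5.22, Theorem~6.18] --- precisely the two ingredients you use (existence of an optimal coupling of $\nu,\mu$ to seed the synchronous chains, and completeness of $(\mathcal{P}_p,\mathcal{W}_p)$ to extract $\pi_h$ as the $\mathcal{W}_p$-Cauchy limit of $\delta_{z_0}P_h^n$). Your norm-equivalence chain $\tfrac{\min\{1,a\}}{2}\|z\|^2\le\|z\|_{a,b}^2\le\tfrac{3\max\{1,a\}}{2}\|z\|^2$ accounts exactly for the prefactor $3\max\{a,1/a\}$, and the passage from the a.s.\ pointwise bound to $\mathcal{W}_p^2$ via $L^{p/2}$ (ess\,sup for $p=\infty$) is the right move.

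Two small remarks. First, the condition written in the statement as $b^2>a$ is a typographical slip for $b^2<a$ (cf.\ Section~\ref{Sec:Quadratic_Norm}); your argument implicitly uses the latter, as it must, and every theorem in the paper instantiates $a,b$ with $b^2<a$. Second, your third stage is more elaborate than necessary: once you know the one-step kernel maps $\mathcal{P}_p$ into itself for $p<\infty$ (which it does, being affine in $(x,v)$ plus Gaussian noise with $h$-controlled covariance), the geometric Cauchy estimate $\mathcal{W}_p(\delta_{z_0}P_h^n,\delta_{z_0}P_h^{n+\ell})\le\sum_{k\ge n}\rho^{k}\,\mathcal{W}_p(\delta_{z_0},\delta_{z_0}P_h)$ already lives in $\mathcal{P}_p$ and completeness hands you $\pi_h\in\mathcal{P}_p$ directly, with no separate Lyapunov drift bound or lower-semicontinuity transfer needed. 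Your caveat about $p=\infty$ is well taken --- the $\mathcal{W}_\infty$ contraction is immediate from the uniform pointwise bound, but membership of $\pi_h$ in ``$\mathcal{P}_\infty$'' has to be read in the sense the cited reference intends (finite $\mathcal{W}_\infty$-distance to a suitable reference class) rather than as compact support, which Gaussian increments preclude.
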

\begin{proof}
 The proof is given in \cite{monmarche2021high}[Corollary 20], which relies on \cite{villani2009optimal}[Corollary 5.22, Theorem 6.18].
\end{proof}
The focus of this article is to prove contractions of the form (\ref{eq:contraction_inequality}), and hence to achieve Wasserstein convergence rates by Proposition \ref{prop:Wasserstein}. With convergence to the invariant measure of the discretizations of kinetic Langevin dynamics considered here it will be possible to combine our results with estimates of the bias of each scheme as in \cite{dalalyan2020sampling}, \cite{monmarche2021high}, \cite{sanz2021wasserstein} and \cite{cheng2018underdamped} to obtain non-asymptotic estimates.

\section{Overdamped Langevin discretizations and contraction}

We first consider two discretizations of the SDE (\ref{eq:overdamped}), namely the Euler-Maruyama discretization and the high-friction limit of the popular kinetic Langevin dynamics scheme BAOAB \cite{leimkuhler2013rational}. The simplest discretization of overdamped Langevin dynamics is using the Euler-Maruyama (EM) method which is defined by the update rule
\begin{equation}
 x_{n+1} = x_{n} - h\nabla U\left(x_{n}\right) + \sqrt{2h}\xi_{n+1},   
\end{equation}
where $(\xi_{n})_{n \in \mathbb{N}}$ are independent $d$-dimensional standard Gaussian random variables, that is $\xi_{n} \sim \mathcal{N}(0,I_{d})$ for all $n \in \mathbb{N}$, where $I_{d}$ is the $d$-dimensional identity matrix.
This scheme is combined with Metropolization in the popular MALA algorithm.

An alternative method is the BAOAB limit method of Leimkuhler and Matthews (LM)(\cite{leimkuhler2013rational}, \cite{leimkuhler2014long}) which is defined by the update rule
\begin{equation}\label{eq:LM}
x_{n+1} = x_{n} - h\nabla U\left(x_{n}\right) + \sqrt{2h}\frac{\xi_{n+1} + \xi_{n}}{2}.
\end{equation}
The advantage of this method is that it gains a weak order of accuracy asymptotically.

\subsection{Convergence guarantees} \label{sec:conv_overdamped}
The convergence guarantees of overdamped Langevin dynamics and its discretizations have been extensively studied under the assumptions presented (see \cite{dalalyan2017theoretical,durmus2017nonasymptotic,cheng2018convergence,dalalyan2017further,durmus2019high, durmus2019analysis, dwivedi2018log}). We use synchronous coupling as a proof strategy to obtain convergence rates as in \cite{dalalyan2017theoretical}. We first consider two chains $x_{n}$ and $y_{n}$ with shared noise such that
\begin{align*}
    x_{n+1} = x_{n} - h\nabla U(x_{n}) + \sqrt{2h}\xi_{n+1}, \quad y_{n+1} = y_{n} - h\nabla U(y_{n}) + \sqrt{2h}\xi_{n+1}.
\end{align*}
Then we have that 
\begin{align*}
    &||x_{n+1} - y_{n+1}||^2 = ||x_{n} - y_{n} + h(-\nabla U(x_{n}) - (-\nabla U(y_{n}))||^{2} \\
    &= ||x_{n} - y_{n}||^{2} - 2h \langle \nabla U(x_{n}) - \nabla U(y_{n}) , x_{n} - y_{n}\rangle + h^{2}||\nabla U(x_{n}) - \nabla U(y_{n})||^{2}\\
    &= ||x_{n} - y_{n}||^{2} - 2h \langle x_{n} - y_{n}, Q(x_{n} - y_{n})\rangle + h^{2}\langle x_{n} - y_{n}, Q^{2} (x_{n} - y_{n}) \rangle,
\end{align*}
where $Q = \int^{1}_{t = 0}\nabla^{2}U(x_{n} + t(y_{n} - x_{n}))dt$. $Q$ has eigenvalues which are bounded between $m$ and $M$, so $Q^{2} \preceq MQ$, and hence 
\[
h^{2}\langle x_{n} - y_{n}, Q^{2} (x_{n} - y_{n}) \rangle \leq h^{2}M \langle x_{n} - y_{n}, Q(x_{n} - y_{n}) \rangle.
\]
Therefore 
$
 ||x_{n+1} - y_{n+1}||^2 \leq ||x_{n} - y_{n}||^{2}(1 - hm(2-hM)),
$
assuming that $h \leq \frac{2}{M}$ we have contraction and 
\begin{equation}\label{eq:overdamped_contraction}
||x_{n} - y_{n}|| \leq (1 - hm(2 - hM))^{n/2}||x_{0} - y_{0}||.
\end{equation}
A consequence of this contraction result is that we have convergence in Wasserstein distance to the invariant measure with rate $hm\left(2 - hM\right)$, under the imposed assumptions on $h$ (as discussed in Section \ref{sec:wasserstein_def})\cite{monmarche2021high,villani2009optimal}. 

{
This argument is similar to the LM discretization \eqref{eq:LM} of overdamped Langevin dynamics. 
Note that $(x_n)_{n\ge 0}$ by itself does not define a Markov chain for this discretization, but by extending the state space to include the noise, $(x_n,\xi_n)_{n\ge 0}$ does. 
For two initial points $(x_0,\xi_0)$ and $(y_0,\xi_0')$, by using shared noise $\xi_n=\xi_n'$ for $n\ge 1$, it follows by the same argument that
\begin{equation}\label{eq:overdamped_contraction2}
||x_{n} - y_{n}|| \leq (1 - hm(2 - hM))^{n/2}||x_{1} - y_{1}||,
\end{equation}
for $n \geq 2$, since two processes have shared noise after $n = 0$.
}

The stepsize assumption for convergence of overdamped Langevin dynamics in this setting is weak and is the same assumption as is needed to guarantee convergence of gradient descent in optimization \cite{boyd2004convex}. 

\section{Kinetic Langevin Dynamics}
We now consider several discretization methods for the SDE (\ref{eq:underdamped_langevin}). {The aim is to} construct an alternative Euclidean norm in which we can prove contraction (it is not possible to prove contraction in the standard Euclidean norm). Essentially, we convert the problem of proving contraction to the problem of showing that certain matrices are positive definite. {First, in Section \ref{sec:discretisations} we introduce the discretization methods we consider; then in Section \ref{sec:strategy} we provide a framework for proving contraction for these discretizations, and in Section \ref{sec:results} we detail the contraction results for each of the schemes.}
\subsection{Discretization schemes}\label{sec:discretisations}

{We will consider some popular numerical discretization methods for kinetic Langevin dynamics, arising from molecular dynamics and machine learning.}

\subsubsection{The Euler-Maruyama method}

{First, we consider the simplest discretization, the Euler-Maruyama method. For the initial condition $(x_0,v_0) \in \mathbb{R}^{2d}$, the iterates $(x_n,v_n,\xi_n)$ for $n \in \mathbb{N}$ are defined by:
\begin{eqnarray}
x_{n+1} & = & x_n + hv_n,\\
v_{n+1} & = & v_n - h\nabla U(x_n) - h\gamma v_n + \sqrt{2\gamma h}\xi_{n+1},
\end{eqnarray}}
{where $(\xi_n)_{n\in\mathbb{N}}$ are independent ${\cal N}(0,I_{d})$ draws.}
\subsubsection{Splitting Methods} \label{sec:splittings}
More advanced integrators than Euler-Maruyama can be constructed based on splitting. These rely on an additive decomposition of the SDE into various terms which can be easily (often exactly) integrated.  A useful class of schemes relies on the exact integration of linear positional drift,  impulse due to the force and a dissipative-stochastic term corresponding to an Ornstein-Uhlenbeck equation \cite{PhysRevE.75.056707}. 
{
The solution maps corresponding to these parts may be denoted by $\mathcal{B}$, $\mathcal{A}$,  and $\mathcal{O}$ with update rules given by
\begin{equation}\label{eq:BAO}
\begin{split}
    &\mathcal{B}: \left(x, v\right) \to \left(x, v - h\nabla U(x)\right), \\
    &\mathcal{A}: \left(x,v\right)  \to \left(x + hv,v\right),\\
    &\mathcal{O}: \left(x,v\right) \to \left(x,\eta v + \sqrt{1 - \eta^{2}}\xi\right),
\end{split}  
\end{equation}
where $\xi \sim \mathcal{N}(0,I_{d})$ and
\[
\eta := \exp{\left(-\gamma h \right)}.
\] 
The infinitesimal generator of the SDE dynamics \eqref{eq:underdamped_langevin} can be split as 
 $\mathcal{L} = \mathcal{L}_{\mathcal{A}} + \mathcal{L}_{\mathcal{B}} + \gamma\mathcal{L}_{\mathcal{O}}$, where
 \begin{equation}
 \label{eq:split_gen}
 \mathcal{L}_{\mathcal{A}} = \left\langle v, \nabla_{x}\right\rangle, \qquad \mathcal{L}_{\mathcal{B}} = -\left\langle\nabla U\left(x\right), \nabla_{v}\right\rangle, \qquad \mathcal{L}_{\mathcal{O}} = -\left \langle v, \nabla_{v} \right\rangle + \Delta_{v},
\end{equation}
where $\mathcal{L}_{\mathcal{A}}$ and $\mathcal{L}_{\mathcal{B}}$
are the deterministic dynamics related to the both $\mathcal{A}$ and $\mathcal{B}$, while the dynamics of $\gamma\mathcal{L}_{\mathcal{O}}$
corresponds to the dynamics of an Ornstein-Uhlenbeck process.
The idea of splitting schemes is that we compose the maps $\mathcal{A}$, $\mathcal{B}$  and $\mathcal{O}$ as functions
in some way.    Each of the deterministic terms can be solved analytically, whereas 
$\exp(h \mathcal{L}_{\mathcal{O}})$
is realized by a weakly exact process. 
There are several possible options for the ordering of the different parts which are denoted by different strings.  For example, the scheme ABO would apply, in sequence, the A, B and O propagators. Other alternative options include BAO, BOA, AOB, OAB, and OBA.  Such schemes are called first order because they have a weak order of one.}

Alternatively, we can go beyond first-order methods to attain higher weak orders than one by using symmetric Strang splittings.
These are defined by palindromic letter sequences such as ABOBA, OBABO, etc. The interpretation of a string such as ABOBA is the following: we apply A for half a timestep (drift), then B for half a timestep (kick), generate a stochastic path corresponding to the Ornstein-Uhlenbeck equation in momentum, then follow with a half-step kick and finally a half-step drift. Such symmetric schemes can typically be applied using only one new force evaluation at each timestep (with the second force evaluation re-used at the start of the following step).  Despite this, the symmetry implies that they have second (weak) order of accuracy (see \cite{leimkuhler2013rational,leimkuhler2016computation}), meaning that the bias in long run averages is $O(h^2)$ for stepsize $h$ as $h\to 0$. Thus they are efficient in providing high accuracy at little additional cost compared to first-order methods.  Moreover, as shown in \cite{leimkuhler2013jcp}, a particular choice of splitting, namely the BAOAB method, has no bias at all for Gaussian targets.

{We will use the notation $\Psi_{\rm{BAOAB}}(x,v,h)$ throughout to denote the one-step map of the BAOAB discretization with initial conditions $(x,v) \in \mathbb{R}^{2d}$ and stepsize $h >0$, and similarly for other discretizations.}

\subsubsection{The stochastic exponential Euler method}

{See \cite{durmus2021uniform} for an introduction to the stochastic exponential Euler scheme and a derivation. This scheme is based on keeping the force constant and analytically integrating the whole process over a time interval. This scheme is the one considered in \cite{cheng2018underdamped,dalalyan2020sampling} and has gained a lot of attention in the machine learning community and we can apply our methods to this scheme. Similar schemes have also been considered in \cite{chandrasekhar1943stochastic,ermak1980numerical,skeel2002impulse} and it has been analyzed in 
\cite{durmus2021uniform,shi2012convergence}. In the notation we have used, it is defined by the updates
\begin{equation}\label{eq:SES}
    \begin{split}
    x_{n+1} &= x_{n} + \frac{1-\eta}{\gamma}v_{n} - \frac{\gamma h + \eta -1}{\gamma^{2}}\nabla U\left(x_{n}\right) + \zeta_{n+1},\\
    v_{n+1} &= \eta v_{n} - \frac{1 - \eta}{\gamma} \nabla U\left(x_{n}\right) + \omega_{n+1},
    \end{split}
\end{equation}
where
\begin{equation} \label{eq:SES_noise}
    \zeta_{n+1} = \sqrt{2\gamma}\int^{h}_{0}e^{-\gamma\left( h - s\right)}dW_{h\gamma + s}, \qquad \omega_{n+1} = \sqrt{2\gamma} \int^{h}_{0} \frac{1 - e^{-\gamma\left( h - s\right)}}{\gamma}dW_{h\gamma + s}.
\end{equation}
$\left(\zeta_{n},\omega_{n}\right)_{n \in \mathbb{N}}$ are Gaussian random vectors with covariances matrices which are given in \cite{durmus2021uniform}. We can couple two trajectories which have common noise $\left(\zeta_{n},\omega_{n}\right)_{n \in \mathbb{N}}$ to obtain contraction rates by the previously introduced methods.}

\subsection{Proof Strategy}\label{sec:strategy}
{To prove contraction and Wasserstein convergence of the kinetic Langevin integrators} we will consider a modified Euclidean norm as defined in Section \ref{Sec:Quadratic_Norm} for some choice of $a$ and $b$. We aim to construct an equivalent Euclidean norm such that contraction occurs for two Markov chains simulated by the same discretization scheme $z_{n} = (x_{n},v_{n}) \in \mathbb{R}^{2d}$ and $\Tilde{z}_{n} = (\Tilde{x}_{n},\Tilde{v}_{n}) \in \mathbb{R}^{2d}$ that are synchronously coupled. That is, for some choice of $a$ and $b$ such that $a,b >0$ and {$b^{2} < a/4$}
\begin{equation}\label{eq:cont_1}
  ||\Tilde{z}_{n+1} - z_{n+1}||^{2}_{a,b} < \left(1 - c\left(h\right)\right)||\Tilde{z}_{n} - z_{n}||^{2}_{a,b},  
\end{equation}
where $a$ and $b$ are chosen to provide reasonable explicit assumptions on the stepsize $h$ and friction parameter $\gamma$. Our initial choices of $a$ and $b$ for simple schemes are motivated by \cite{monmarche2021high}, and are derived by considering contraction of the continuous dynamics. Let $\overline{z}_{j} = \Tilde{z}_{j} - z_{j}$ for $j \in \mathbb{N}$, then (\ref{eq:cont_1}) is equivalent to showing that 
\begin{equation}\label{eq:contraction_matrix_form}
 \overline{z}^{T}_{n}\left(\left(1 - c\left(h\right)\right){G}- P^{T}{G}P\right )\overline{z}_{n} > 0,    \quad \textnormal{where} \quad {G} = \begin{pmatrix}
    I_{d} & bI_{d} \\
    bI_{d} & aI_{d}
\end{pmatrix},
\end{equation}
and $\overline{z}_{n+1} = P\overline{z}_{n}$ ($P$ depends on $z_{n}$ and $\Tilde{z}_{n}$, but we omit this in the notation).
\begin{example}
As an example, we have for the Euler-Maruyama method the update rule for $\overline{z}_{n}$
\begin{align*}
    \overline{x}_{n+1} = \overline{x}_{n} + h \overline{v}_{n}, \qquad  \overline{v}_{n+1} = \overline{v}_{n} - \gamma h \overline{v}_{n} -hQ\overline{x}_{n},
\end{align*}
where by the mean value theorem we can define $Q = \int^{1}_{t = 0}\nabla^{2}U(\Tilde{x}_{n} + t(x_{n} - \Tilde{x}_{n}))dt$, then $\nabla U(\Tilde{x}_{n}) - \nabla U(x_{n}) = Q\overline{x}$. One can show that in the notation of equation \eqref{eq:contraction_matrix_form} we have 
\begin{equation}\label{eq:P_matrix}
    P = \begin{pmatrix} I_{d} & hI_{d}\\
 -hQ & \left(1- \gamma h\right)I_{d} 
\end{pmatrix}.
\end{equation}  
\end{example}
Proving contraction for a general scheme is equivalent to showing that the matrix $\mathcal{H}:=  \left(1 - c(h)\right){G} - P^{T}{G}P \succ 0$ is positive definite. The matrix $\mathcal{H}$ is symmetric and hence of the form
\begin{equation}\label{eq:contraction_matrix}
\mathcal{H} = \begin{pmatrix}
    A & B \\
    B & C
\end{pmatrix},    
\end{equation}
we can show that $\mathcal{H}$ is positive definite by applying the following Proposition \ref{Prop:PD}.
\begin{proposition} \label{Prop:PD}
Let $\mathcal{H}$ be a symmetric matrix of the form (\ref{eq:contraction_matrix}), then $\mathcal{H}$ is positive definite if and only if $A \succ 0$ and $C - BA^{-1}B \succ 0$. Further if $A$, $B$ and $C$ commute then $\mathcal{H}$ is positive definite if and only if $A\succ 0$ and $AC - B^{2} \succ 0$.
\end{proposition}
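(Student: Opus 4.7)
The plan is to prove both parts via a block $LDL^{\top}$ (Schur complement) decomposition. First I would write
\[
\mathcal{H}=\begin{pmatrix} I & 0\\ BA^{-1} & I\end{pmatrix}\begin{pmatrix} A & 0 \\ 0 & C-BA^{-1}B\end{pmatrix}\begin{pmatrix} I & A^{-1}B\\ 0 & I\end{pmatrix},
\]
which is valid provided $A$ is invertible (and this invertibility is forced in either direction: if $\mathcal{H}\succ 0$ then the top-left block $A$ must be positive definite, hence invertible; and if we assume $A\succ 0$ we may form $A^{-1}$). Since the outer factors are unit-triangular, hence nonsingular, this is a congruence transformation and preserves positive definiteness. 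Therefore $\mathcal{H}\succ 0$ if and only if the block-diagonal middle factor is positive definite, which is equivalent to $A\succ 0$ together with the Schur complement $C-BA^{-1}B\succ 0$. I would also briefly verify the symmetry of the Schur complement using that $B$ is symmetric.

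For the commuting case, the key observation is that if $A,B,C$ are symmetric and pairwise commute, then $A^{-1}$ commutes with $B$ and $C$ as well, so
\[
C-BA^{-1}B \;=\; A^{-1}(AC-B^{2}),
\]
and $AC-B^{2}$ is symmetric because $(AC)^{\top}=C^{\top}A^{\top}=CA=AC$. Assuming $A\succ 0$, I would argue that $C-BA^{-1}B\succ 0$ if and only if $AC-B^{2}\succ 0$ using simultaneous diagonalization: the pairwise commuting family $\{A,B,C\}$ is simultaneously orthogonally diagonalizable, so one can pass to a common eigenbasis and check the equivalence eigenvalue by eigenvalue, where it reduces to the scalar fact that, for $\alpha>0$, $\gamma-\beta^{2}/\alpha>0$ iff $\alpha\gamma-\beta^{2}>0$. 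Combining with the first part yields the stated characterisation.

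The main obstacle is essentially bookkeeping rather than depth: one must be careful to justify that the Schur complement factorisation is genuine, which requires $A$ to be invertible in both directions of the ``iff'', and to confirm symmetry of the objects produced along the way. In the commuting case one must also check that $A^{-1}(AC-B^{2})$ is itself symmetric (it is, because $A^{-1}$ commutes with the symmetric matrix $AC-B^{2}$), so that the comparison with $AC-B^{2}$ via simultaneous diagonalization is meaningful. No new analytical machinery beyond linear algebra is needed.
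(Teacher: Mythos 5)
Your proof is correct but routes through slightly different linear-algebra facts than the paper does. For the first part, where the paper simply cites a standard reference for the Schur-complement criterion, you supply the underlying $LDL^{\top}$ congruence argument yourself; this is essentially the proof of the cited lemma, so it makes the argument more self-contained without changing its substance. For the commuting part, the divergence is more visible: the paper invokes the fact that a product of two commuting positive-definite matrices is positive definite, applying it once in each direction to pass between $C - BA^{-1}B$ and $AC - B^{2} = A\left(C - BA^{-1}B\right)$, whereas you appeal to simultaneous orthogonal diagonalization of the pairwise-commuting symmetric family $\{A, B, C\}$ and reduce the equivalence to the scalar statement that, for $\alpha > 0$, $\gamma - \beta^{2}/\alpha > 0$ iff $\alpha\gamma - \beta^{2} > 0$. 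These two devices are of course closely related (the product-of-commuting-PDs lemma is itself most cleanly proved by simultaneous diagonalization), but your version is arguably more transparent since it turns the biconditional into a single eigenvalue-by-eigenvalue check, at the cost of explicitly invoking the simultaneous diagonalizability of the whole family rather than a pairwise product fact. Both arguments are complete and correct; you also rightly flag the small hygiene points (invertibility of $A$ in both directions of the iff, and symmetry of the Schur complement and of $AC - B^{2}$) that the paper leaves implicit.
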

\begin{proof}
The proof of the first result is given in \cite{horn2005basic}. To establish the second statement, observe from \cite{horn2012matrix}  that if two matrices are positive definite and they commute then the product is positive definite. Also if $A \succ 0$ then $A^{-1} \succ 0$ (as $A$ is symmetric positive definite). Further $A, B$ and $C$ commute and hence $B$, $C$ and $A^{-1}$ commute. Therefore by applying the first result, we have that $A \succ 0$ and
\[A^{-1}\left(AC - B^{2}\right) = C - BA^{-1}B \succ 0,\]
hence $\mathcal{H}$ is positive definite. If $\mathcal{H}$ is positive definite then $A \succ 0$ and  $C - BA^{-1}B \succ 0$ by the first result. Thus as $A$, $B$ and $C$ commute we have $AC - B^2 \succ 0$.
\end{proof}

\begin{remark}
An equivalent condition for a symmetric matrix $\mathcal{H}$ of the form (\ref{eq:contraction_matrix}) to be positive definite is $C\succ 0$ and $AC - B^{2} \succ 0$ when $A$, $B$ and $C$ commute. One could equivalently prove that $C \succ 0$ instead of $A \succ 0$ if it is more convenient.
\end{remark}

Our general approach to prove contraction of kinetic Langevin dynamics schemes is to prove the conditions of Proposition \ref{Prop:PD} are satisfied to establish contraction. We will use the notation laid out in this section in the proofs given in the appendix.
\subsection{{Convergence results}} \label{sec:results}

{We now detail contraction results of all the schemes introduced in Section \ref{sec:discretisations}. These results use the proof strategy of Section \ref{sec:strategy}.}

\subsubsection{Euler-Maruyama discretization}
We define the EM chain with initial condition $(x_0,v_0)$ by $(x_n,v_n,\xi_n)$ where the $(\xi_n)_{n\in\mathbb{N}}$ are independent ${\cal N}(0,1)$ draws and $(x_{n},v_n)$ are updated according to:
\begin{eqnarray*}
x_{n+1} & = & x_n + hv_n,\\
v_{n+1} & = & v_n - h\nabla U(x_n) - h\gamma v_n + \sqrt{2\gamma h}\xi_{n+1}.
\end{eqnarray*}

\begin{theorem} \label{Theorem:EM} Assume $U$ is an $m$-strongly convex and $M$-$\nabla$Lipschitz potential. When $\gamma^{2} \geq 4M$ and $h < \frac{1}{2\gamma}$, we have that, for all initial conditions $\left(x_{0},v_{0}\right) \in \mathbb{R}^{2d}$ and $\left(\Tilde{x}_{0},\Tilde{v}_{0}\right)\in \mathbb{R}^{2d}$, and for any sequence of standard normal random variables $\left(\xi_{n}\right)_{n \in \mathbb{N}}$, the corresponding EM chains $\left(x_{n},v_{n},\xi_{n}\right)_{n \in \mathbb{N}}$ and $\left(\Tilde{x}_{n},\Tilde{v}_{n},\xi_{n}\right)_{n \in \mathbb{N}}$ with initial conditions $\left(x_{0},v_{0}\right) \in \mathbb{R}^{2d}$ and $\left(\Tilde{x}_{0},\Tilde{v}_{0}\right)\in \mathbb{R}^{2d}$, respectively, satisfy
\[||(x_{n} - \Tilde{x}_{n},v_{n} - \Tilde{v}_{n})||_{a,b} \leq \left(1 - c\left(h\right)\right)^{\frac{n}{2}}||(x_{0} - \Tilde{x}_{0},v_{0} - \Tilde{v}_{0})||_{a,b},
    \]
where $a = \frac{1}{M}$, $b = \frac{1}{\gamma}$ and
$
c\left(h\right) = \frac{mh}{2\gamma}.
$
\end{theorem}

 \begin{example}
\textit{An example to illustrate the tightness of the restrictions on the stepsize $h$ and the restriction on the friction parameter $\gamma$.} We consider the anisotropic Gaussian distribution on $\mathbb{R}^{2}$ with potential $U: \mathbb{R}^{2} \mapsto \mathbb{R}$ given by $U(x,y) = \frac{1}{2}mx^{2} + \frac{1}{2}My^{2}$. This potential satisfies Assumptions \ref{assum:G_Lipschitz} and \ref{assum:convex} with constants $M$ and $m$ respectively. By computing the eigenvalues of the transition matrix $P$ (for contraction) we can see for what values of $h$ contraction occurs.  {For a Gaussian target, stability and asymptotic convergence speed are completely determined by the
eigenvalues of $P$ (which is constant)}. For EM we have that
 \[
P = \begin{pmatrix} I_{2} & hI_{2}\\
-hQ & \left(1- \gamma h\right)I_{2} 
\end{pmatrix}\text{, where }Q = \begin{pmatrix}
    m & 0 \\
    0 & M \end{pmatrix},
\]
with eigenvalues 
$\frac{1}{2}\left(2 - \gamma h \pm h \sqrt{\gamma^{2} - 4\lambda}\right)$,
for $\lambda = m,M$. For stability and contraction, we require that 
{
\begin{equation}\label{eq:condEM}
\lambda_{\max}:=\max_{\lambda\in \{m,M\}}\left|\frac{1}{2}\left(2 - \gamma h \pm h \sqrt{\gamma^{2} - 4\lambda} \right)\right| < 1. 
\end{equation}
By Gelfand's formula, the asymptotic contraction rate exactly equals $1-\lambda_{\max}$.
For $\gamma \geq 2\sqrt{M}$, all 4 eigenvalues are real, and this condition requires that $h < 4/(\gamma + \sqrt{\gamma^{2}-4\lambda}) \approx 2/\gamma$. Up to a constant, this is consistent with the stepsize restriction in our contraction rate results. $\lambda_{\max}$ in this range of $\gamma$ equals $1-\frac{1}{2} h\gamma \left(1-\sqrt{1-\frac{4m}{\gamma^2}}\right)$, so the best possible contraction rate is $O(m/M)$ (for the choice $\gamma=2\sqrt{M}$), which is also consistent with our results.\\ 
For $\gamma\in [2\sqrt{m},2\sqrt{M})$, the absolute value of the eigenvalues are
$
\sqrt{1-\gamma h+Mh^2}, 
$
and 
$
\frac{1}{2}\left|2-\gamma h \pm h\sqrt{\gamma^2-4m}\right|,
$
where we need all of these to be less than 1.
The first condition $1-\gamma h+Mh^2\le 1$ requires that $h\le \frac{\gamma}{M}$. The second condition $\frac{1}{2}\left|2-\gamma h-h\sqrt{\gamma^2-4m}\right|\le 1$ requires that $h\le \frac{4}{\gamma+\sqrt{\gamma^2-4m}}$. Using our assumption that $\gamma\in [2\sqrt{m},2\sqrt{M})$, $\gamma^2\le 4M$, so $\frac{4}{\gamma+\sqrt{\gamma^2-4m}}\ge \frac{2\gamma}{\gamma^2}\ge \frac{\gamma}{2M}$, and the second condition holds whenever $h\le \frac{\gamma}{2M}$. The third condition is satisfied whenever the second holds. Hence the stepsize restriction in this regime is $\frac{\gamma}{2M}$. One can show that the best possible convergence rate is still $O(m/M)$.
\\
Finally, when $\gamma<2\sqrt{m}$, \eqref{eq:condEM} becomes equivalent to $\sqrt{1-\gamma h+ Mh^2}<1$ and $\sqrt{1-\gamma h+mh^2}<1$, which results in the stepsize restriction $h\le \frac{\gamma}{M}$. The best possible convergence rate in this regime is still $O(m/M)$.
}
 \end{example}


\subsection{First order splittings}

We will now consider contraction for all first-order splitting methods (permutations of the $\mathcal{B}$, $\mathcal{A}$ and $\mathcal{O}$ pieces), which are schemes with weak order $1$. We first consider BAO, where we define a BAO chain with initial condition $\left(x_{0},v_{0}\right) \in \mathbb{R}^{2d}$ by $\left(x_{n},v_{n},\xi_{n}\right)_{n \in \mathbb{N}}$, using the update $\mathcal{BAO}$ (\ref{eq:BAO}) and $\left(\xi_{n}\right)_{n \in \mathbb{N}}$ are vectors of standard normal random variables.
 
\begin{theorem}[BAO] \label{Theorem:BAO} Assume $U$ is an $m$-strongly convex and $M$-$\nabla$Lipschitz potential. When $h < \frac{1 - \eta}{\sqrt{6M}}$, we have that for all initial conditions $\left(x_{0},v_{0}\right) \in \mathbb{R}^{2d}$ and $\left(\Tilde{x}_{0},\Tilde{v}_{0}\right)\in \mathbb{R}^{2d}$, and for any sequence of standard normal random variables $\left(\xi_{n}\right)_{n \in \mathbb{N}}$ the BAO chains $\left(x_{n},v_{n},\xi_{n}\right)_{n \in \mathbb{N}}$ and $\left(\Tilde{x}_{n},\Tilde{v}_{n},\xi_{n}\right)_{n \in \mathbb{N}}$ with initial conditions $\left(x_{0},v_{0}\right) \in \mathbb{R}^{2d}$ and $\left(\Tilde{x}_{0},\Tilde{v}_{0}\right)\in \mathbb{R}^{2d}$, respectively, satisfy
\[||(x_{n} - \Tilde{x}_{n},v_{n} - \Tilde{v}_{n})||_{a,b} \leq \left(1 - c\left(h\right)\right)^{\frac{n}{2}}||(x_{0} - \Tilde{x}_{0},v_{0} - \Tilde{v}_{0})||_{a,b},
    \]
where $a = \frac{1}{M}$ and $b = \frac{h}{1-\eta}$ and 
$
c\left(h\right) = \frac{h^{2}m}{4\left(1 - \eta\right)}.
$
\end{theorem}
\begin{remark}
The modified Euclidean norm has now been chosen to be stepsize-dependent and is needed to eliminate the corresponding dependency of the stepsize on the strong convexity constant $m$. We note that simply choosing {$a = 1/M$ and} $b = 1/\gamma$ does not result in a norm which guarantees a stepsize restriction which is independent of $m$, as is clear from the motivation of the construction of our choice of $b$. {This is due to the fact that one of the eigenvalues of $AC-B^{2}$ for BAO will be of the form $\mathcal{O}(m) - (b(1-\eta) - h)^{2} > 0$. Therefore if one fixes $h$ one can make $m$ sufficiently small for contraction not to occur.} 

We also point out that the stepsize restriction implicitly implies that $\gamma^{2}$ is larger than some constant factor multiplied by $M$. Further, for large $\gamma$ (for example  $\gamma \geq 5\sqrt{M}$) we have convergence for stepsizes independent of the size of $\gamma$ (for example $h < 1/8\sqrt{M}$). {The weakest stepsize restrictions for a kinetic Langevin discretization in the literature are those of \cite{sanz2021wasserstein}, which scale like $1/\gamma$. By contrast, for schemes which have stepsize restrictions independent of the friction parameter, $\gamma$ will have an increased stability property, which may be useful in practice.}
\end{remark}

 \begin{example} \textit{An example to illustrate the tightness of the restrictions on the stepsize $h$ and the restriction on the friction parameter $\gamma$.} We consider the anisotropic Gaussian distribution on $\mathbb{R}^{2}$ with potential $U: \mathbb{R}^{2} \mapsto \mathbb{R}$ given by $U(x,y) = \frac{1}{2}mx^{2} + \frac{1}{2}My^{2}$. By computing the eigenvalues of the transition matrix $P$ (for contraction) we can see for what values of $h$ contraction occurs. For BAO we have that
 \[
P = \begin{pmatrix} I_{2} - h^{2}Q & hI_{2}\\
-h\eta Q & \eta I_{2} 
\end{pmatrix}\text{, where }Q = \begin{pmatrix}
    m & 0 \\
    0 & M \end{pmatrix},
\]
with eigenvalues 
$\frac{1}{2}\left(1 + \eta - h^{2}\lambda \pm \sqrt{-4\eta + \left(-1 - \eta + h^{2}\lambda\right)^{2}}\right) \text{ for }\lambda = m,M$,
where $\eta = \exp{\{-\gamma h\}}$. For stability and contraction, it is necessary and sufficient that 
{
\[
\lambda_{\max}:=\max_{\lambda\in \{m,M\}}\left|\frac{1}{2}\left(1 + \eta - h^{2}\lambda \pm \sqrt{-4\eta + \left(-1 - \eta + h^{2}\lambda\right)^{2}}\right)\right| < 1.
\]
Due to the convexity of the absolute value function, it is clear that $|\frac{1}{2}(1+\eta-h^2 M)|<1$ is necessary for the stability condition to hold, so for any value of $\gamma$ and $m$, we need that $h\le \frac{2}{\sqrt{M}}$. 
Theorem \ref{Theorem:BAO} implies that the stepsize restriction $h\le \frac{1}{8 \sqrt{M}}$ suffices for stability for any $\gamma\ge 5\sqrt{M}$.
Further when $\gamma \geq 5\sqrt{M}$ we have that the asymptotic contraction rate for this Gaussian target simplifies to
$
c_{\mathcal{N}} = \frac{1}{2}\left(1- \eta + h^2m - \sqrt{\left(1 - \eta + h^2 m \right)^2 - 4h^2m}\right).
$
It can be shown that $4 c(h) > c_{\mathcal{N}}$ for  $\gamma\ge 5\sqrt{M}, h\le \frac{1}{8 \sqrt{M}}$.
It is shown in \cite{monmarche2020almost}[Proposition 4] that for the continuous dynamics the condition $\gamma>c\sqrt{M}$ for some constant $c>0$ is necessary to show contraction using a quadratic form argument similar to ours.\\
Despite this fact, for Gaussian targets, faster convergence rates can be achieved for BAO in the low-friction regime ($\gamma<5\sqrt{M}$). In particular, when we set $\gamma=2\sqrt{m}$, and use stepsize $h=\frac{1}{\sqrt{M}}$, with the notation $\rho=\frac{\sqrt{m}}{\sqrt{M}}$, the maximum norm becomes 
\begin{align*}
&\lambda_{\max}:=\\
&\frac{1}{2}\max\left(\left|e^{-2\rho}\pm \sqrt{e^{-4\rho}-4e^{-2\rho}}\right|, \left|1-\rho^2+e^{-2\rho} \pm \sqrt{(1+e^{-2\rho}-\rho^2)^2-4e^{-2\rho}}\right|\right). 
\end{align*}
It is not difficult to show with symbolic computing that for this choice of $\gamma$ and $h$, for any $0\le \rho\le 1$, we have 
\begin{equation}\label{eq:BAOacceleration}c_{\mathcal{N}}=1-\lambda_{\max}\ge \frac{3}{5}\rho=\frac{3}{5} \frac{\sqrt{m}}{\sqrt{M}}.\end{equation}
This is an accelerated convergence rate that is faster than what we could prove for general strongly convex and smooth potentials in Theorem \ref{Theorem:BAO}.
}
\end{example}

\begin{theorem}[OAB] \label{Theorem:OAB}
Assume $U$ is an $m$-strongly convex and $M$-$\nabla$Lipschitz potential. When $h < \min{\{\frac{1}{4\gamma},\frac{1-\eta}{\sqrt{6M}}}\}$, we have that for all initial conditions $\left(x_{0},v_{0}\right) \in \mathbb{R}^{2d}$ and $\left(\Tilde{x}_{0},\Tilde{v}_{0}\right)\in \mathbb{R}^{2d}$, and for any sequence of standard normal random variables $\left(\xi_{n}\right)_{n \in \mathbb{N}}$ the OAB chains $\left(x_{n},v_{n},\xi_{n}\right)_{n \in \mathbb{N}}$ and $\left(\Tilde{x}_{n},\Tilde{v}_{n},\xi_{n}\right)_{n \in \mathbb{N}}$ with initial conditions $\left(x_{0},v_{0}\right) \in \mathbb{R}^{2d}$ and $\left(\Tilde{x}_{0},\Tilde{v}_{0}\right)\in \mathbb{R}^{2d}$, respectively, satisfy
\[|\left|(x_{n} - \Tilde{x}_{n},v_{n} - \Tilde{v}_{n}\right)||_{a,b} \leq \left(1 - c\left( h\right)\right)^{\frac{n}{2}}||\left(x_{0} - \Tilde{x}_{0},v_{0} - \Tilde{v}_{0}\right)||_{a,b},
    \]
where $a = \frac{1}{M}$, $b = \frac{\eta h}{1-\eta}$ and $c\left(h\right) = \frac{\eta h^{2}m}{4\left(1 - \eta\right)}$.
\end{theorem}

Considering other splittings one could use the same techniques as above or we can use the contraction results of BAO and OAB to achieve a contraction result for the remaining permutations by writing 
$
    (\mathcal{ABO})^{n} = \mathcal{AB}(\mathcal{OAB})^{n-1}\mathcal{O}, (\mathcal{BOA})^{n} = \mathcal{B}(\mathcal{OAB})^{n-1}\mathcal{OA},
    (\mathcal{OBA})^{n} = \mathcal{O}(\mathcal{BAO})^{n-1}\mathcal{BA}$, and $(\mathcal{AOB})^{n} = \mathcal{AO}(\mathcal{BAO})^{n-1} \mathcal{B}.
$
However, by applying direct arguments as done for OAB and BAO one would achieve better preconstants. Let $\left(\Tilde{x}_{0}, \Tilde{v}_{0} \right) \in \mathbb{R}^{2d}$ and $\left(x_{0},v_{0}\right) \in \mathbb{R}^{2d}$ be two initial conditions for a synchronous coupling of sample paths of the ABO splitting and $\overline{x}_{0}:= \Tilde{x}_{0} - x_{0}$, $\overline{v}_{0}:= \Tilde{v}_{0} - v_{0}$. We use the notation $\Psi_{\rm{ABO}}$ to denote the one-step map of the ABO discretization with stepsize $h>0$, and equivalently for other operators {(omitting the stepsize in the argument, which is $h>0$ for all the one-step maps considered)}. We have that for $h < \min{\{\frac{1}{4\gamma},\frac{1-\eta}{\sqrt{6M}}}\}$
\begin{align*}
&||\Psi_{\rm{ABO}}\left(\Tilde{x}_{n},\Tilde{v}_{n}\right) -  \Psi_{\rm{ABO}}\left(x_{n},v_{n}\right)||^{2}_{a,b} = ||\left(\Psi_{\rm{ABO}}\right)^{n}\left(\Tilde{x}_{0},\Tilde{v}_{0}\right) -  \left(\Psi_{\rm{ABO}}\right)^{n}\left(x_{0},v_{0}\right)||^{2}_{a,b} \\
    &= ||\Psi_{\rm{O}} \circ \left(\Psi_{\rm{OAB}}\right)^{n-1} \circ \Psi_{\rm{AB}} \left(\Tilde{x}_{0},\Tilde{v}_{0}\right) - \Psi_{\rm{O}} \circ \left(\Psi_{\rm{OAB}}\right)^{n-1} \circ \Psi_{\rm{AB}}\left(x_{0},v_{0}\right)||^{2}_{a,b}\\
    &\leq 3\left(1 - c\left(h\right) \right)^{n-1} ||\Psi_{\rm{AB}}\left(\Tilde{x}_{0},\Tilde{v}_{0}\right) - \Psi_{\rm{AB}}\left(x_{0},v_{0}\right)||^{2}_{a,b}\\
    &\leq 9\left(1 - c\left(h\right)  \right)^{n-1}\left(\left(1 + 2h^{2}M^{2}a\right)||\overline{x}_{0}||^{2} + \left(h^{2} + a + 2h^{4} M^{2}a \right) ||\overline{v}_{0}||^{2} \right)\\
    &\leq 27\left(1 - c\left(h\right)  \right)^{n-1}||\left(\overline{x}_{0},\overline{v}_{0}\right)||^{2}_{a,b},
\end{align*}
where we have used the norm equivalence introduced in Section \ref{Sec:Quadratic_Norm}. The same method of argument can be used for the other first-order splittings.


\section{Higher-order splittings}

We now consider higher-order schemes which are obtained by the splittings introduced in Section \ref{sec:splittings}. These schemes are weak order two and they are symmetric in the order of the operators, with repeated operators corresponding to multiple steps with half the stepsize. We will focus our attention on two popular splittings which are BAOAB and ABOBA (or OBABO) as in \cite{leimkuhler2013rational}. Due to the fact that the modified Euclidean norms developed in the previous section are different for different first-order splitting, we aren't able to simply compose the results of say OBA and ABO to obtain contraction of OBABO. First we consider the BAOAB discretization, where we denote a BAOAB chain with initial condition $\left(x_{0},v_{0}\right) \in \mathbb{R}^{2d}$ by $\left(x_{n},v_{n},\xi_{n}\right)_{n \in \mathbb{N}}$, which are defined by the update $\mathcal{BAOAB}$ (\ref{eq:BAO}) and $\left(\xi_{n}\right)_{n \in \mathbb{N}}$ are independent Gaussian random variables.

\begin{theorem}[BAOAB] \label{Theorem:BAOAB} Assume $U$ is an $m$-strongly convex and $M$-$\nabla$Lipschitz potential. When  $h \leq \frac{1-\eta}{2\sqrt{M}}$, we have that for all initial conditions $\left(x_{0},v_{0}\right) \in \mathbb{R}^{2d}$ and $\left(\Tilde{x}_{0},\Tilde{v}_{0}\right)\in \mathbb{R}^{2d}$, and for any sequence of standard normal random variables $\left(\xi_{n}\right)_{n \in \mathbb{N}}$ the BAOAB chains $\left(x_{n},v_{n},\xi_{n}\right)_{n \in \mathbb{N}}$ and $\left(\Tilde{x}_{n},\Tilde{v}_{n},\xi_{n}\right)_{n \in \mathbb{N}}$ with initial conditions $\left(x_{0},v_{0}\right) \in \mathbb{R}^{2d}$ and $\left(\Tilde{x}_{0},\Tilde{v}_{0}\right)\in \mathbb{R}^{2d}$, respectively, satisfy
\[||\left(x_{n} - \Tilde{x}_{n},v_{n} - \Tilde{v}_{n}\right)||_{a,b} \leq 7\left(1 - c\left(h\right)\right)^{\frac{n-1}{2}}||\left(x_{0} - \Tilde{x}_{0},v_{0} - \Tilde{v}_{0}\right)||_{a,b},
    \]
where $a = \frac{1}{M}$ and $b = \frac{h}{1-\eta}$ and
$
c\left(h\right) = \frac{h^{2}m}{4\left(1- \eta\right)}.
$
\end{theorem}

Next, we consider the OBABO discretization which has been studied in the recent work \cite{monmarche2021high}. We denote a OBABO chain with initial condition $\left(x_{0},v_{0}\right) \in \mathbb{R}^{2d}$ by $\left(x_{n},v_{n},\xi_{n}\right)_{n \in \mathbb{N}}$, which are defined by the update $\mathcal{OBABO}$ (\ref{eq:BAO}) and $\left(\xi_{n}\right)_{n \in \mathbb{N}}$ are independent Gaussian random variables.

\begin{theorem}[OBABO]\label{Theorem:OBABO}
 Assume $U$ is an $m$-strongly convex and $M$-$\nabla$Lipschitz potential. When  $h < \frac{1 - \eta}{\sqrt{4M}}$, we have that for all initial conditions $\left(x_{0},v_{0}\right) \in \mathbb{R}^{2d}$ and $\left(\Tilde{x}_{0},\Tilde{v}_{0}\right)\in \mathbb{R}^{2d}$, and for any sequence of standard normal random variables $\left(\xi_{n}\right)_{n \in \mathbb{N}}$ the OBABO chains $\left(x_{n},v_{n},\xi_{n}\right)_{n \in \mathbb{N}}$ and $\left(\Tilde{x}_{n},\Tilde{v}_{n},\xi_{n}\right)_{n \in \mathbb{N}}$ with initial conditions $\left(x_{0},v_{0}\right) \in \mathbb{R}^{2d}$ and $\left(\Tilde{x}_{0},\Tilde{v}_{0}\right)\in \mathbb{R}^{2d}$, respectively, satisfy
\[||\left(x_{n} - \Tilde{x}_{n},v_{n} - \Tilde{v}_{n}\right)||_{a,b} \leq 7\left(1 - c\left(h\right)\right)^{\frac{n-1}{2}}||\left(x_{0} - \Tilde{x}_{0},v_{0} - \Tilde{v}_{0}\right)||_{a,b},
    \]
where $a = \frac{1}{M}$, $b = \frac{h}{1-\eta}$ and
$
c\left(h\right) = \frac{h^{2}m}{4\left(1 - \eta\right)}.
$
\end{theorem}

\begin{remark}
In \cite{dalalyan2020sampling} it is shown that the continuous dynamics converges with a rate of $\mathcal{O}(m/\gamma)$. There is a major difference in terms of contraction rate for large $\gamma$ between the rates achieved by BAOAB and OBABO and the continuous dynamics. As in the limit $\gamma \to \infty$ in BAOAB and OBABO one has convergence rates of $\mathcal{O}(h^{2}m)$, whereas the contraction rate of the continuous dynamics converges to zero. A time-rescaling of $(t' = \gamma t)$ is required to derive the overdamped Langevin dynamics.
\end{remark}

\begin{remark}
    In Theorem \ref{Theorem:BAOAB} and Theorem \ref{Theorem:OBABO} we have a prefactor of $7$ because we have converted the problem of contraction into proving a simpler problem with one gradient evaluation. {To be specific, for BAOAB using the relation $(\mathcal{BAOAB})^{n} = \mathcal{BAO}\left(\mathcal{ABAO}\right)^{n-1}\mathcal{AB}$, we prove contraction for $\mathcal{ABAO}$ and similarly for OBABO.} The prefactor comes from the remaining terms $\mathcal{BAO}$ and $\mathcal{AB}$.
\end{remark}

\begin{remark}
    In \cite{monmarche2022hmc} they analyze Hamiltonian Monte Carlo as $\mathcal{O}\left(\mathcal{ABA}\right)^{L}\mathcal{O}$ for $L$ leapfrog steps. In \cite{monmarche2022hmc} a similar norm is used to study Hamiltonian Monte Carlo, however, they obtain stepsize restrictions of at least $\mathcal{O}\left(m/L^{3/2}\right)$.
\end{remark}
\section{The stochastic exponential Euler scheme}
For the stochastic exponential Euler scheme given in the update rule \ref{eq:SES} we can couple two trajectories which have common noise $\left(\zeta_{n},\omega_{n}\right)_{n \in \mathbb{N}}$, then we can obtain contraction rates by the previously introduced methods. We denote an SES chain with initial condition $\left(x_{0},v_{0}\right) \in \mathbb{R}^{2d}$ by {$\left(x_{n},v_{n},\zeta_{n},\omega_{n}\right)_{n \in \mathbb{N}}$}, which are defined by the update SES (\ref{eq:SES}).

\begin{theorem}[Stochastic Euler Scheme]\label{Theorem:SES}
Assume $U$ is an $m$-strongly convex and $M$-$\nabla$Lipschitz potential. When $\gamma \geq 5\sqrt{M}$ and $h \leq \frac{1}{2\gamma}$, we have that for all initial conditions $\left(x_{0},v_{0}\right) \in \mathbb{R}^{2d}$ and $\left(\Tilde{x}_{0},\Tilde{v}_{0}\right)\in \mathbb{R}^{2d}$, and for any sequence of  random variables $\left({\zeta_{n},\omega_{n}}\right)_{n \in \mathbb{N}}$ {defined by \eqref{eq:SES_noise}}, the SES chains $\left(x_{n},v_{n},{\zeta_{n},\omega_{n}}\right)_{n \in \mathbb{N}}$ and $\left(\Tilde{x}_{n},\Tilde{v}_{n},{\zeta_{n},\omega_{n}}\right)_{n \in \mathbb{N}}$ with initial conditions $\left(x_{0},v_{0}\right) \in \mathbb{R}^{2d}$ and $\left(\Tilde{x}_{0},\Tilde{v}_{0}\right)\in \mathbb{R}^{2d}$, respectively, satisfy
\[||\left(x_{n} - \Tilde{x}_{n},v_{n} - \Tilde{v}_{n}\right)||_{a,b} \leq \left(1 - c\left(h\right)\right)^{\frac{n}{2}}||\left(x_{0} - \Tilde{x}_{0},v_{0} - \Tilde{v}_{0}\right)||_{a,b},
    \]
where $a = \frac{1}{M}$, $b = \frac{1}{\gamma}$ and
$
c\left(h\right) = \frac{mh}{4\gamma}.
$
\end{theorem}

\section{Overdamped Limit}
We will now compare and analyze how the different schemes behave in the high-friction limit, where we first start with the first-order schemes. It is a desirable property that the high-friction limit is a discretization of the overdamped dynamics, therefore if a user of such a scheme sets the friction parameter $\gamma$ large, they will not suffer from the $\mathcal{O}(1/\gamma)$ scaling of the convergence rate. We will call schemes with this desirable property $\gamma$-limit convergent (GLC), out of the schemes we have analysed it is only BAOAB and OBABO which are GLC.
\subsection{BAO}
If we consider the update rule of the BAO scheme
\begin{align*}
    x_{n+1} = x_{n} + h\left(v_{n} - h \nabla U(x_{n})\right), \quad v_{n+1} = \eta v_{n} - h\eta \nabla U(x_{n}) + \sqrt{1 - \eta^{2}}\xi_{n+1},
\end{align*}
and take the limit as $\gamma \to \infty$ we obtain
\begin{align*}
    x_{n+1} &= x_{n} - h^{2} \nabla U(x_{n}) + h \xi_{n},
\end{align*}
which is simply the Euler-Maruyama scheme with stepsize $h^{2}/2$ for potential $\Tilde{U} := 2U$, which imposes stepsize restrictions which are consistent with our analysis. Further, if we take the limit of the contraction rate and the modified Euclidean norm we have
\[
\lim_{\gamma \to \infty} c\left(h\right) = \frac{h^{2}m}{4}, \qquad \lim_{\gamma \to \infty} ||x||^{2} + 2b\langle x,v \rangle + a||v||^{2} = ||x||^{2} + 2h \langle x,v \rangle + \frac{1}{M}||v||^{2},
\]
which is again consistent with the convergence rates achieved in Section \ref{sec:conv_overdamped} and the norm is essentially the Euclidean norm when considered on the overdamped process as $\overline{v} = 0$. Due to the fact that the potential is rescaled in the limit, this is not a discretization of the {correct} overdamped dynamics.

\subsection{OAB}
If we consider the update rule of the OAB scheme
\begin{align*}
    x_{n+1} &= x_{n} + h\eta v_{n} + h \sqrt{1 - \eta^{2}}\xi_{n+1},\\
    v_{n+1} &= \eta v_{n} \sqrt{1 - \eta^{2}}\xi_{n+1} - h\eta \nabla U(x_{n} + h\eta v_{n} + h \sqrt{1 - \eta^{2}}\xi_{n+1}),
\end{align*}
and take the limit as $\gamma \to \infty$ we obtain the update rule $x_{n+1} = x_{n} + h \xi_{n+1}$, therefore the overdamped limit is not inherited by the scheme and further we do not expect contraction. This is consistent with our analysis of OAB and our contraction rate which vanishes in the high-friction limit.
\subsection{BAOAB}
If we consider the update rule of the BAOAB scheme
\begin{align*}
    x_{n+1} &= x_{n} + \frac{h}{2}\left(1 + \eta\right)v_{n} - \frac{h^{2}}{4}\left(1 + \eta\right) \nabla U(x_{n}) + \frac{h}{2}\sqrt{1 - \eta^{2}}\xi_{n+1},\\
    v_{n+1} &= \eta \left(v_{n} - \frac{h}{2}\nabla U(x_{n})\right) + \sqrt{1 - \eta^{2}}\xi_{n+1} - \frac{h}{2}\nabla U(x_{n+1}),
\end{align*}
and take the limit as $\gamma \to \infty$ we obtain
\begin{align*}
    x_{n+1} &= x_{n} - \frac{h^{2}}{2} \nabla U(x_{n}) + \frac{h}{2}\left(\xi_{n} + \xi_{n+1}\right),
\end{align*}
which is simply the LM scheme with stepsize $h^{2}/2$ (as originally noted in \cite{leimkuhler2013rational}), which imposes stepsize restrictions $h^{2} \leq 2/M$ and hence consistent with our analysis. Further, if we take the limit of the contraction rate and the modified Euclidean norm we have
\[
\lim_{\gamma \to \infty} c\left(h\right) = \frac{h^{2}m}{4}, \qquad \lim_{\gamma \to \infty} ||x||^{2} + 2b\langle x,v \rangle + a||v||^{2} = ||x||^{2} + 2h \langle x,v \rangle + \frac{1}{M}||v||^{2},
\]
which is again consistent with the convergence rates achieved in Section \ref{sec:conv_overdamped} and the modified Euclidean norm is essentially the Euclidean norm when considered on the overdamped process as $\overline{v} = 0$.

\subsection{OBABO}
If we consider the update rule of the OBABO scheme 
\begin{align*}
    x_{n+1} &= x_{n} + h\eta v_{n} + h\sqrt{1 - \eta^2}\xi_{1,n+1} - \frac{h^{2}}{2}\nabla U(x_{n}),\\
    v_{n+1} &= \eta\left(\eta v + \sqrt{1 - \eta^2}\xi_{1,n+1} - \frac{h}{2}\nabla U(x_{n}) - \frac{h}{2}\nabla U(x_{n+1})\right) + \sqrt{1 - \eta^{2}}\xi_{2,n+1},
\end{align*}
where ($\eta = \exp{\left(-\gamma h /2\right)}$) and for ease of notation in the above scheme and we have labelled the two noises of one step $\xi_{1}$ and $\xi_{2}$. Now we take the limit as $\gamma \to \infty$ we obtain
\begin{align*}
    x_{n+1} &= x_{n} -\frac{h^{2}}{2}\nabla U(x_{n}) + h \xi_{n+1},
\end{align*}
which is the Euler-Maruyama scheme for overdamped Langevin with stepsize $h^{2}/2$, which has convergence rate $\mathcal{O}\left(h^{2}m\right)$. Hence consistent with our analysis of OBABO and our contraction rate which tends towards $h^{2}m/4$ in the high-friction limit.

\subsection{SES}
If we consider the limit as $\gamma \to \infty$ of the scheme (\ref{eq:SES}) we obtain the update rule $x_{n+1} = x_{n}$ and therefore the overdamped limit is not inherited by the scheme and further we do not expect contraction. Hence consistent with our analysis of the stochastic Euler scheme as the contraction rate tends to zero in the high-friction limit.

\section{Asymptotic Bias of BAOAB}\label{sec:bias}

There are results for the asymptotic bias of OBABO available in \cite{monmarche2021high,monmarche2022hmc} and for the SES in \cite{sanz2021wasserstein} which can easily be combined with our results. However there are no results for BAOAB available in the literature, we will provide asymptotic bias estimates for this scheme. We will do this with the aim of achieving bias estimates which remain finite in the high-friction limit to show that BAOAB and GLC schemes remain useful for sampling even though they deviate from the continuous dynamics.

We define the solution map $\mathcal{H}$ to have update rule
\begin{equation}\label{eq:H_step}
     \mathcal{H}: (x,v) \to \phi_{h}(x,v),
\end{equation}
where $\phi_{h}(x,v)$ is the solution to the ODE
\begin{align*}
    dX_{t} &= V_{t}dt, \qquad dV_{t} = -\nabla U(X_{t})dt,
\end{align*}
initialized at $(X_{0},V_{0}) := (x,v) \in \mathbb{R}^{2d}$ at time $h > 0$. Since the BAOAB scheme converges faster than the continuous dynamics, if we compare BAOAB to the underdamped Langevin dynamics in the high-friction regime, we will get discretization bounds which diverge as $\gamma \to \infty$ and the stepsize is kept constant. We instead compare BAOAB to a scheme which performs exact Hamiltonian dynamics for half a step, followed by an OU-process for a full step, followed by Hamiltonian dynamics for half a step. We call this scheme the HOH scheme with the update rule given by $\mathcal{HOH}$ with $\mathcal{H}$ defined in \eqref{eq:H_step}. This process exactly preserves the invariant measure and is a more accurate approximation of the BAOAB scheme than \eqref{eq:underdamped_langevin}. 

\begin{remark}
In \cite{monmarche2021high} and \cite{monmarche2022hmc}, the authors provide bias estimates for the OBABO and OABAO schemes. In their analysis of these schemes, they use the fact that the ``O" step preserves the invariant measure and doesn’t increase Wasserstein distance.  They are then able to exploit $L^{2}$-accuracy results of the embedded Verlet integrators BAB and ABA in their analysis.
\end{remark}

\begin{proposition}\label{prop:BAOAB_local_error}
    Consider an HOH scheme initialized at $(x,v) \in \mathbb{R}^{2d}$ and a BAOAB scheme initialized at $(x',v') \in \mathbb{R}^{2d}$ with synchronously coupled Gaussian increments and stepsize $h > 0$, then we define $(\Delta_{x},\Delta_{v}) := \Psi_{\textnormal{HOH}}(x,v,h) - \Psi_{\textnormal{BAOAB}}(x',v',h)$ with shared noise $\xi \sim \mathcal{N}(0,I_{d})$. We assume that $h <\frac{1-\eta}{2\sqrt{M}}$ and Assumptions \ref{assum:G_Lipschitz} - \ref{assum:convex} on the potential, then we have that
    \begin{equation*}
        \begin{split}
            &\|\Delta_{x}\|_{L^{2}} \leq  \left(1 + (1+\eta)\frac{h^{2}}{4}M\right)\|x - x'\|_{L^{2}} + \frac{h}{2}(1+\eta)\|v-v'\|_{L^{2}} + \frac{3h^{3}M\sqrt{d}}{8},\\
            &\Delta_{v}= \left(\eta I_{d} - \frac{h^{2}(1+\eta)}{4}Q_{2}\right)(v-v') +\left(-\frac{h\eta}{2}Q_{1} - \frac{h}{2}Q_{2} + \frac{h^{3}(1+\eta)}{8}Q_{2}Q_{1}\right)(x-x') \\
    &+ \epsilon_{v},
            \intertext{where $\|\epsilon_{v}\|_{L^{2}} \leq 2h^{2}M\sqrt{d}$. Further if we assume Assumption \ref{assum:H_Lipschitz} we have}
            \Delta_{v}&= \left(\eta I_{d} - \frac{h^{2}(1+\eta)}{4}Q_{2}\right)(v-v') +\left(-\frac{h\eta}{2}Q_{1} - \frac{h}{2}Q_{2} + \frac{h^{3}(1+\eta)}{8}Q_{2}Q_{1}\right)(x-x') \\
    &+ \epsilon_{v} + h^{2}\sqrt{1-\eta^{2}}A(x,x')\xi,
        \end{split}
    \end{equation*}
where $m I_{d} \prec Q_{1}, Q_{2} \prec MI_{d}$, $\epsilon_{v} \in \mathbb{R}^{2d}$, $A(x,x') \in \mathbb{R}^{d \times d}$ and $\|\cdot \|_{L^{2}} := (\mathbb{E}\| \cdot \|^{2})^{1/2}$, with $\|\epsilon_{v}\|_{L^{2}} \leq 2h^{3}\sqrt{d}(M^{3/2}+M_{1}\sqrt{d})$. Further we have that $\|A(x,x')\xi\|_{L^{2}} \leq \frac{3}{8}M\sqrt{d}$.
\end{proposition}
\begin{proof}
    The proof is found in Appendix \ref{Appendix:Bias}.
\end{proof}
\begin{proposition}\label{prop:second_kernel_estimate}
Consider an HOH scheme, $(x_{i},v_{i})_{i \in \mathbb{N}}$ and a BAOAB scheme $(x'_{i},v'_{i})_{i\in \mathbb{N}}$ initialized at $(x_{0},v_{0}) = (x'_{0},v'_{0}) = (x,v)\sim \pi$ in $\mathbb{R}^{2d}$ with synchronously coupled Gaussian increments and stepsize $h<\frac{1-\eta}{2\sqrt{M}}$, for $l \in \mathbb{N}$ we define $(\Delta^{l}_{x},\Delta^{l}_{v}) :=  (x_{l}-x'_{l},v_{l}-v'_{l})$. For $a = 1/M$ and $b = h/(1-\eta)$ we have that under Assumptions \ref{assum:G_Lipschitz}-\ref{assum:convex}, for any $l\ge 1$,
\[
\|(\Delta^{l}_{x},\Delta^{l}_{v})\|_{L^{2},a,b} \leq 4400 \frac{M}{m} \sqrt{d}h.
\]
Additionally, if Assumption \ref{assum:H_Lipschitz} is satisfied, we have for any $l\ge 1$,
\[
\|(\Delta^{l}_{x},\Delta^{l}_{v})\|_{L^{2},a,b}  \leq 
1500 \frac{\sqrt{M}}{m}\left(4\sqrt{Md}+3\frac{M_1}{M} d\right)h(1-\eta).
\]
\end{proposition}
\begin{proof}
    The proof is found in Appendix \ref{Appendix:Bias}.
\end{proof}
\begin{remark}
    In Proposition \ref{prop:second_kernel_estimate} we provide $L^{2}$ error estimates over $l \in \mathbb{N}$ steps, this allows one to go from order $h^{5/2}$ local error to order $h^{2}$ global error, as the $h^{5/2}$ term in the local error estimate is due to independent Gaussian increments. Similarly, the strong convergence of numerical solutions of SDEs only loses an order of $1/2$ accuracy (see \cite{MR2069903}[Theorem 1.1.1]), for example, the Euler-Maruyama scheme or the UBU scheme in \cite{sanz2021wasserstein}.
\end{remark}

\begin{theorem}\label{theorem:asymptotic_bias}
    Consider a BAOAB scheme with stepsize $h < \frac{1-\eta}{2\sqrt{M}}$ and invariant measure $\pi_{h}$ on $\mathbb{R}^{2d}$, with target measure $\pi$ on $\mathbb{R}^{2d}$. For $a = 1/M$ and $b = h/(1-\eta)$ we have that under Assumptions \ref{assum:G_Lipschitz}-\ref{assum:convex} 
    \[
    \mathcal{W}_{2,a,b}(\pi,\pi_{h}) \leq 4400 \frac{M}{m} \sqrt{d} h.
    \]
 Additionally, if Assumption \ref{assum:H_Lipschitz} is satisfied we have
    \begin{align*}
            \mathcal{W}_{2,a,b}(\pi,\pi_{h}) &\leq 1500 \frac{\sqrt{M}}{m}\left(4\sqrt{Md}+3\frac{M_1}{M} d\right)h(1-\eta).
        \end{align*}
\end{theorem}
\begin{proof}
    Let $P_{h}$ denote the transition kernel of the BAOAB scheme, which has invariant measure $\pi_{h}$. Then we have that for $l \in \mathbb{N}$
    \begin{align*}
        \mathcal{W}_{2,a,b}(\pi,\pi_{h}) \leq \mathcal{W}_{2,a,b}(\pi P^{l}_{h},\pi_{h}) + \mathcal{W}_{2,a,b}(\pi P^{l}_{h},\pi).
    \end{align*}
    We now estimate each of the terms on the right-hand side separately. We have by Theorem \ref{Theorem:BAOAB} 
    \begin{align*}
        \mathcal{W}_{2,a,b}(\pi P^{l}_{h},\pi_{h}) = \mathcal{W}_{2,a,b}(\pi P^{l}_{h},\pi_{h}P^{l}_{h}) \leq 7(1-c(h))^{(l-1)/2}\mathcal{W}_{2,a,b}(\pi,\pi_{h}).
    \end{align*}
    This term tends to zero as $l$ tends to infinity.
    By Proposition \ref{prop:second_kernel_estimate}, and the definition of the Wasserstein distance, 
    we can bound the term $\mathcal{W}_{2,a,b}(\pi P^{l}_{h},\pi)$ uniformly for any $l$, hence our claims follow.
\end{proof}
Note that the constraints on the stepsize and friction parameter are the same as for our contraction result i.e. close to the stability threshold. A consequence of the asymptotic bias results together with the contraction results is that when the potential satisfies Assumptions \ref{assum:G_Lipschitz}-\ref{assum:convex} the BAOAB scheme requires $\mathcal{O}(d^{1/2}/\epsilon)$ steps to reach an accuracy $\epsilon > 0$ in Wasserstein distance from the target. If the potential further satisfies Assumption \ref{assum:H_Lipschitz} this can be improved to $\mathcal{O}(d^{1/2}/\epsilon^{1/2})$ steps to reach an accuracy $\epsilon > 0$ in Wasserstein distance from the target. Perhaps under stronger assumptions like the strongly Hessian Lipschitz assumption of \cite{chen2023does}, the dimension dependency can be improved. We remark that the strongly Hessian Lipschitz assumption of \cite{chen2023does} implies Assumption \ref{assum:H_Lipschitz}.

\section{Discussion}
\begin{figure}
	\centering
	\begin{subfigure}[b]{0.32\textwidth}
		\centering
		\includegraphics[width=\textwidth]{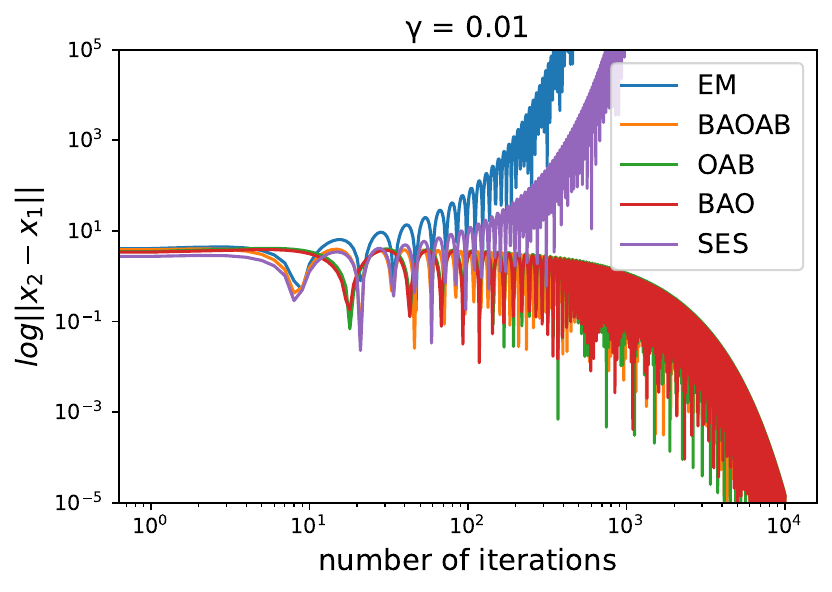}
		\caption{Low friction}
		
	\end{subfigure}
	\hfill
	\begin{subfigure}[b]{0.32\textwidth}
		\centering
		\includegraphics[width=\textwidth]{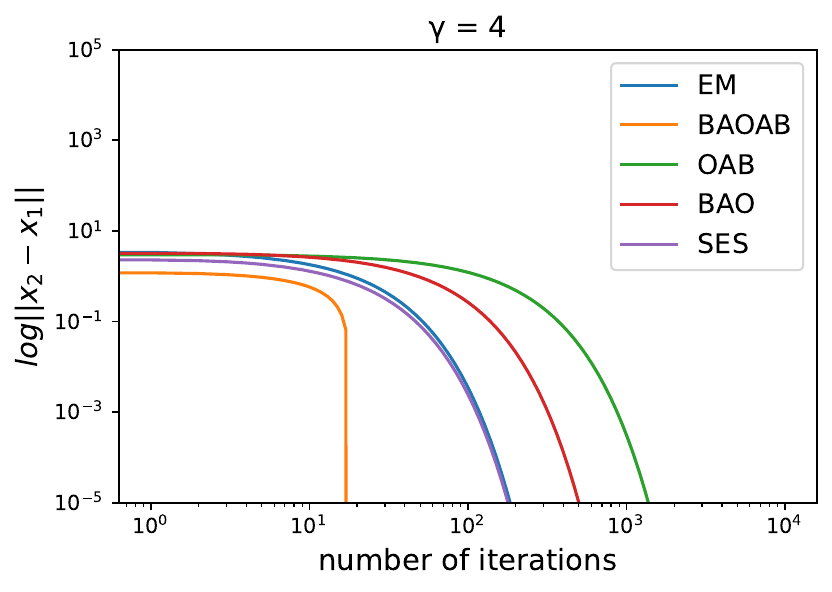}
		\caption{}
		
	\end{subfigure}
	\hfill
	\begin{subfigure}[b]{0.32\textwidth}
		\centering
		\includegraphics[width=\textwidth]{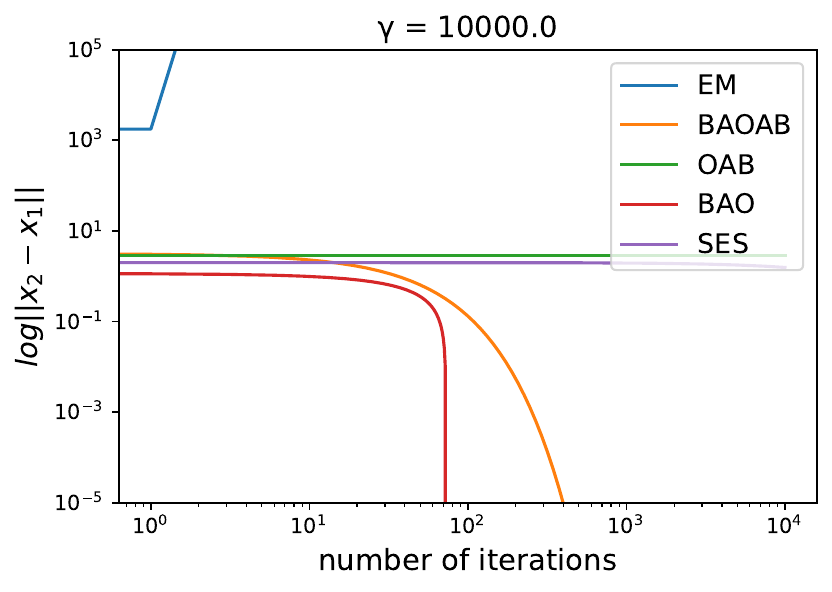}
		\caption{High friction}
		
	\end{subfigure}
	\caption{Contraction of two kinetic Langevin trajectories $x_{1}$ and $x_{2}$ with initial conditions $[-1,-1]$ and $[1,1]$ for a $2$-dimensional standard Gaussian with stepsize $h = 0.25 = 1/4\sqrt{M}$.}
    \label{Fig:1}
\end{figure}
We tested our observations numerically in Figure  \ref{Fig:1} with a $2$-dimensional standard Gaussian. Figure  \ref{Fig:1} is consistent with our analysis that all schemes are stable when $\gamma \approx 4\sqrt{M}$ and in the high-friction regime EM, OAB and SES behave poorly compared to BAOAB and BAO. In the low-friction regime again EM and SES perform poorly compared to the other schemes. 

In \cite{dalalyan2020sampling} it is shown that the optimal convergence rate for the continuous time dynamics is $\mathcal{O}(m/\gamma)$, therefore our contraction rates are consistent up to a constant for the discretizations, however for some of the schemes considered for example BAOAB and OBABO we have that the scheme inherits convergence to the overdamped Langevin dynamics (without time rescaling) and this is reflected in our convergence rate estimates. For MCMC applications, our estimates of convergence rate are independent of $\gamma$.  In particular, in the case of $\gamma$-limit-convergence methods, our convergence guarantees are valid for large friction. The robustness with respect to the friction parameter is shown in Figure  \ref{Fig:1}. BAOAB and OBABO do not suffer from slow convergence in the limit and we do not expect a large bias because they converge to a consistent overdamped Langevin dynamics numerical scheme. {We provide asymptotic bias estimates for the BAOAB scheme which remain finite in the high-friction limit and provide non-asymptotic guarantees for this scheme.} We recommend these schemes in the context of sampling due to their robustness with respect to the choice of friction parameter and increased stability.

The constants in our arguments can be improved by sharper bounds and a more careful analysis, but the restriction on $\gamma$ is consistent with other works on synchronous coupling for the continuous time Langevin diffusions \cite{bolley2010trend,cheng2018underdamped,dalalyan2020sampling,deligiannidis2021randomized,zajic2019non}. Further it is shown in \cite{monmarche2020almost}[Proposition 4] that the continuous time process yields Wasserstein contraction by synchronous coupling for all $M$-$\nabla$Lipschitz and $m$-strongly convex potentials $U$ if and only if $M - m < \gamma (\sqrt{M} + \sqrt{m})$ for the norms that we considered. This condition when $M$ is much larger than $m$ is $\mathcal{O}(\sqrt{M})$. It may be possible to achieve convergence rates for small $\gamma$, by using a more sophisticated argument like that of \cite{eberle2019couplings}. Using a different Lyapunov function or techniques may lead to being able to extend these results to all $\gamma > 0$ \cite{durmus2021uniform,qin2022geometric}, but this is beyond the scope of this paper.

The restrictions on the stepsize $h$ are tight for the optimal contraction rate for EM and BAO and hence result in stability conditions of $\mathcal{O}\left(1/\gamma\right)$ for EM and SES. Also, we have shown BAO, OBA, AOB, BAOAB and OBABO have convergence guarantees for stepsizes $\mathcal{O}(1/\sqrt{M})$ and BAOAB and OBABO have the desirable GLC property which is not common amongst the schemes we studied. For the choice of parameters which achieve optimal contraction rate, we derive $\mathcal{O}(m/M)$ rates of contraction, which are sharp up to a constant and we achieve this for every scheme that we studied.

{The constants for the discretization analysis for BAOAB can be improved. However, the dimension-dependence of the non-asymptotic guarantees is in accordance with other numerical integrators for kinetic Langevin dynamics without the use of randomized midpoint methods (see \cite{shen2019randomized}).}

\section*{Acknowledgments}
The authors would like to thank Kostas Zygalakis for helpful comments on this work. The authors would also like to thank the anonymous referees for their feedback and suggestions, which have
improved the quality of the paper. The authors acknowledge the support of the Engineering and Physical Sciences Research Council Grant EP/S023291/1 (MAC-MIGS
Centre for Doctoral Training).

\bibliographystyle{siamplain}
\bibliography{references}
\appendix
\section{Proofs}

\begin{proof}[Proof of Theorem \ref{Theorem:EM}]
We will denote two synchronous realisations of EM as $\left(x_{j},v_{j}\right)$ and $\left(\Tilde{x}_{j}, \Tilde{v}_{j}\right)$ for $j \in \mathbb{N}$. Now we will denote $\overline{x}_{j} := \left(\Tilde{x}_{j} - x_{j}\right)$, $\overline{v}_{j} = \left(\Tilde{v}_{j} - v_{j}\right)$ and $\overline{z}_{j} = \left(\overline{x}_{j}, \overline{v}_{j}\right)$, where $\overline{z}_{j} = \left(\overline{x}_{j}, \overline{v}_{j}\right)$ for $j = n,n +1$ for $n \in \mathbb{N}$. We have the following update rule for $\overline{z}_{n}$
\begin{align*}
    \overline{x}_{n+1} = \overline{x}_{n} + h \overline{v}_{n}, \qquad  \overline{v}_{n+1} = \overline{v}_{n} - \gamma h \overline{v}_{n} -hQ\overline{x}_{n},
\end{align*}
where by mean value theorem we define $Q = \int^{1}_{t = 0}\nabla^{2}U(\Tilde{x}_{n} + t(x_{n} - \Tilde{x}_{n}))dt$, then $\nabla U(\Tilde{x}_{n}) - \nabla U(x_{n}) = Q\overline{x}_{n}$. 
One can show that in the notation of equation \eqref{eq:contraction_matrix_form} we have 
\begin{equation}\label{eq:P_matrix2}
    P = \begin{pmatrix} I_{d} & hI_{d}\\
 -hQ & \left(1- \gamma h\right)I_{d} 
\end{pmatrix},
\end{equation}
and therefore for Euler-Maruyama (using the notation of equation (\ref{eq:contraction_matrix})) 
\begin{align*}
    A &= -c\left(h\right)I_{d}  + 2bhQ - h^{2}aQ^{2},\\
    B &=  -bc\left(h\right)  I_{d}  + h\left((b\gamma - 1)I_{d} + (a + h(b-a\gamma))Q\right), \\
    C &= \left(-c\left(h\right) a  + h\left(2a\gamma - 2b - h(1 - 2b\gamma + a \gamma^{2}) \right)\right)I_{d}.
\end{align*}
We now invoke Proposition \ref{Prop:PD} as $A$, $B$ and $C$ commute as they are all polynomials in $Q$. $A$ is positive definite if and only if all its eigenvalues are positive. We note that the eigenvalues of $A$ are precisely $P_{A}(\lambda) := -c\left(h\right) + h\left( 2b\lambda - ha\lambda^{2}\right)$, where $\lambda$ are the eigenvalues of $Q$, where $m \leq \lambda \leq M$. We wish to show that $P_{A}(\lambda) > 0$ for all $\lambda \in [m,M]$. This is equivalent to 
 \[
     \frac{P_{A}\left(\lambda\right)}{h} = -\frac{m}{2\gamma} + \frac{2\lambda}{\gamma} - \frac{h\lambda^{2}}{M} \geq \lambda\left(-\frac{1}{2\gamma} + \frac{2}{\gamma} - h\right) > 0,
 \]
 which is satisfied when $h < 1/\gamma$. Hence we have that $A \succ 0$. Now it remains to prove that $AC - B^{2} \succ 0$, where $AC - B^{2}$ is a polynomial of $Q$, which we denote $P_{AC-B^{2}}(Q)$. Hence {it} has eigenvalues dictated by the eigenvalues $\lambda$ of $Q$. That is the eigenvalues of $AC - B^{2}$ are $P_{AC-B^{2}}(\lambda)$ for $\lambda$ an eigenvalue of $Q$. Considering $P_{AC - B^{2}}$ we have
 \begin{align*}
      &\frac{P_{AC-B^{2}}\left(\lambda\right)}{h^{2}\lambda} = \frac{4}{M} -\frac{4}{\gamma^2} + \frac{m^2}{4 \gamma^2 M
   \lambda}-\frac{m^2}{4 \gamma^4 \lambda}+\frac{m}{\gamma^2
   \lambda}-\frac{m}{M
   \lambda}-\frac{\lambda}{M^2} + h^2 \left(\frac{\lambda}{M}-\frac{\lambda}{\gamma^2}\right)\\
   &+h
   \left(\frac{2}{\gamma} -\frac{m}{\gamma M} -\frac{m}{2 \gamma \lambda} + \frac{m}{\gamma^3}+\frac{m \lambda}{2 \gamma M^2}+\frac{\gamma m}{2
   M \lambda}-\frac{2
   \gamma}{M}\right) > \frac{1}{M} - h\frac{2\gamma}{M} > 0,
 \end{align*}
where we have used the fact that $\gamma \geq 2\sqrt{M}$ and $h < \frac{1}{2\gamma}$ and hence $AC - B^{2} \succ 0$.
\end{proof}


\begin{proof}[Proof of Theorem \ref{Theorem:BAO}]
We will denote two synchronous realisations of BAO as $\left(x_{j},v_{j}\right)$ and $\left(\Tilde{x}_{j}, \Tilde{v}_{j}\right)$ for $j \in \mathbb{N}$. Now we will denote $\overline{x}_{j} := \left(\Tilde{x}_{j} - x_{j}\right)$, $\overline{v}_{j} = \left(\Tilde{v}_{j} - v_{j}\right)$ and $\overline{z}_{j} = \left(\overline{x}_{j}, \overline{v}_{j}\right)$, where $\overline{z}_{j} = \left(\overline{x}_{j}, \overline{v}_{j}\right)$ for $j = n,n +1$ for $n \in \mathbb{N}$. We have the following update rule for $\overline{z}_{n}$
\begin{align*}
    \overline{x}_{n+1} &= \overline{x}_{n} + h\left(\overline{v}_{n} - hQ \overline{x}_{n}\right), \quad \overline{v}_{n+1} = \eta(\overline{v}_{n} - hQ\overline{x}_{n}),
\end{align*}
where by mean value theorem we define $Q = \int^{1}_{t = 0}\nabla^{2}U(\Tilde{x}_{n} + t(x_{n} - \Tilde{x}_{n}))dt$, then $\nabla U\left(\Tilde{x}_{n}\right) - \nabla U\left(x_{n}\right) = Q\overline{x}_{n}$.
In the notation of (\ref{eq:contraction_matrix}) we have that 
\begin{align*}
    A &= -c\left(h\right)I_{d} + 2\left(b\eta + h\right)hQ - \left(a\eta^2+
 2b\eta h + h^{2}\right)h^{2} Q^{2},\\
    B &= \left(b\left(1-\eta\right) - h -bc\left(h\right)\right)I_{d} + \left(a \eta^{2} + 2b\eta h+ h^{2}\right)hQ,\\
    C &= \left(a\left(1 - \eta^2\right) - 2 b \eta h - h^2 - a  c\left(h\right)\right)I_{d} ,
\end{align*}
where $\eta = \exp{\{-\gamma h\}}$. For our choice of $a$ and $b$, $B$ simplifies to 
$
B = -bc\left(h\right) + \left(a \eta^{2} + 2b\eta h+ h^{2}\right)hQ.
$
This simplification motivates our choice of $b$ as we would like to factor out $m$ from $B$ in proving $AC - B^{2} \succ 0$ in a later calculation and this factorization is necessary to remove the dependence on $m$ in our stepsize estimates.

We will now apply Proposition \ref{Prop:PD}. First considering $A$, we have that the eigenvalues for our choice of $a$, $b$ and $c\left(h\right)$ are precisely 
\[
P_{A}\left(\lambda\right) = -c\left(h\right) + 2b\lambda h - \left(a \eta^2  + 2b \eta h + h^2\right)h^{2}\lambda^2 > h\lambda\left(\frac{7b}{4} - \left(h + \frac{b}{2} + \frac{h}{4} \right)\right) > 0,
\]
where $\lambda$ are the eigenvalues of $Q$ and we have used that $h < 1/(\sqrt{4M})$ and $b \geq h$. Hence we have that $A \succ 0$. Now it remains to prove that $AC - B^{2} \succ 0$, where $AC - B^{2}$ is a polynomial of $Q$, which we denote $P_{AC-B^{2}}(Q)$ and hence has eigenvalues dictated by the eigenvalues $\lambda$ of $Q$. 
We have that 
\begin{align*}
    \frac{P(\lambda)}{h\lambda} &= \frac{\left(\eta^{2}-1\right)c\left(h\right)}{hM\lambda} + h\left( \frac{2\left(1 - \eta^{2}\right)}{\left(1 - \eta\right)M} + \frac{c(h)^{2}}{h^{2}M\lambda} - \frac{\eta^{2}\lambda}{M^{2}} \right) \\
    &+ \frac{c(h)}{h}h^{2}\left(\frac{1}{\lambda} + \frac{2}{(1-\eta)}(\frac{\eta}{\lambda} + \frac{\eta^{2} - 1}{M}) + \frac{\eta^{2} \lambda}{M^{2}}\right)\\
    &+ h^{3}\left( -\frac{4\eta}{(1 -\eta)^2} - \frac{2}{(1 -\eta)} - \frac{c(h)^{2}}{
 (1 -\eta)^2 h^2\lambda} - \frac{\lambda}{M} - \frac{2\eta \lambda}{(1 -\eta) M}\right) \\
    &+ \frac{c(h)}{h}h^{4}  \left(\frac{4\eta}{(1 - \eta)^2} + 
    \frac{2}{(1 - \eta)} + \frac{\lambda}{M} + \frac{
  2\eta \lambda}{(1 - \eta)M}\right)\\
    &> \left( \frac{7h\left(1 + \eta \right)}{4M} - \frac{\eta^{2}h}{M} -\frac{h^{2}}{\sqrt{6M}} \right) + h^{3}\left( -\frac{1}{(1 - \eta)^{2}}\left(4 + \frac{1}{96}\right) -1 - \frac{4}{\left(1-\eta\right)} \right)\\
      &> h\left(\frac{7}{4M} - \frac{1}{6M} - \frac{2}{3M}\left(1 + \frac{1}{384} \right) - \frac{1}{6M}  - \frac{2}{3M} \right) > 0,
\end{align*}
where we have used the fact that $h < \frac{1 - \eta}{\sqrt{6M}}$.
This holds for any $\lambda \in [m,M]$ and hence $AC - B^{2} \succ 0$ and our contraction results hold.
\end{proof}


\begin{proof}[Proof of Theorem \ref{Theorem:OAB}]
We will denote two synchronous realisations of OAB as $\left(x_{j},v_{j}\right)$ and $\left(\Tilde{x}_{j}, \Tilde{v}_{j}\right)$ for $j \in \mathbb{N}$. Now we will denote $\overline{x}_{j} := \left(\Tilde{x}_{j} - x_{j}\right)$, $\overline{v}_{j} = \left(\Tilde{v}_{j} - v_{j}\right)$ and $\overline{z}_{j} = \left(\overline{x}_{j}, \overline{v}_{j}\right)$, where $\overline{z}_{j} = \left(\overline{x}_{j}, \overline{v}_{j}\right)$ for $j = n,n +1$ for $n \in \mathbb{N}$. We have the following update rule for $\overline{z}_{n}$
\begin{align*}
    \overline{x}_{n+1} = \overline{x}_{n} + h \eta \overline{v}_{n}, \quad\overline{v}_{n+1} = \eta\overline{v}_{n} - hQ\overline{x}_{n+1},
\end{align*}
where we define $Q= \int^{1}_{t = 0}\nabla^{2}U(\Tilde{x}_{n+1} + t(x_{n+1} - \Tilde{x}_{n+1}))dt$ by mean value theorem,
then $\nabla U\left(\Tilde{x}_{n+1}\right) - \nabla U\left(x_{n+1}\right) = Q\overline{x}_{n+1}$. 
In the notation of (\ref{eq:contraction_matrix}) we have that 
\begin{align*}
    A &= -c\left(h\right)I_{d}  + 2 b h Q - a h^2 Q^2,\\
    B &= \left(b\left(1-\eta\right)  - \eta h   - bc\left(h\right)\right)I_{d}  + (a \eta h + 2 b \eta h^2) Q - 
 a \eta h^3 Q^2,\\
    C &= \left(a\left(1-\eta^2\right) - 2 b \eta^2 h - \eta^2 h^2 - ac\left(h\right)\right)I_{d} + (2 a \eta^2 h^2 + 2 b \eta^2 h^3) Q - a \eta^2 h^4 Q^2,
\end{align*}
where $\eta = \exp{\{-\gamma h\}}$. For our choice of $a$ and $b$, $B$ simplifies to 
$
     B =  -b c\left(h\right) + (a \eta h + 2 b \eta h^2) Q - 
 a \eta h^3 Q^2.
$
We will now apply Proposition \ref{Prop:PD}, first considering $A$ we have that the eigenvalues are precisely
 \begin{align*}
     P_{A}(\lambda) = h\lambda\left(-\frac{c\left(h\right)}{h\lambda} + 2 b - a h \lambda\right) &\geq h\lambda\left(\frac{7b}{4} - h\right) > 0,
 \end{align*}
Hence we have that $A \succ 0$ and we note that this condition enforces a dependency of $h$ on $\gamma$ to ensure convergence. Now it remains to prove that $AC - B^{2} \succ 0$, now we have that $AC - B^{2}$ is a polynomial of $Q$, which we denote $P_{AC - B^{2}}(Q)$ and hence has eigenvalues dictated by the eigenvalues $\lambda$ of $Q$. 

Using the fact that $h\gamma \geq 1-\eta \geq \frac{h\gamma}{2}$ and hence $\eta \geq \frac{3}{4}$ for $h < \frac{1}{4\gamma}$. Also using that $h \leq \frac{1-\eta}{\sqrt{6M}}$ we have that
\begin{align*}
    &\frac{P_{AC - B^{2}}\left(\lambda\right)}{h\lambda} =  \frac{\left(\eta^2 -1\right) c\left(h\right)}{h M \lambda} + h \left(\frac{2 \eta\left(1 - \eta^{2}\right)}{\left(1 - \eta\right) M} + \frac{c\left(h\right)^{2}}{h^{2} M \lambda} - \frac{\lambda}{M^{2}}\right)\\
    &+ h^{2} \frac{c\left(h\right)}{h} \left( - \frac{2\eta}{ \left(1 - \eta\right) M} - \frac{2\eta^2}{M} + \frac{2\eta^2}{\left(1 - \eta\right)M} + \frac{\eta^{2}}{\lambda} + \frac{2\eta^{3}}{\left(1 - \eta\right)\lambda} + \frac{ \lambda}{M^2} \right)\\
    &+ h^3 \left(-\frac{4\eta^{4}}{\left(1 - \eta\right)^{2}} -\frac{2\eta^{3}}{1 - \eta} - \frac{\eta^{2}c\left(h\right)^{2}}{\left(1-\eta\right)^{2}h^{2}\lambda} + \frac{\lambda\eta^{2}}{M}  + \frac{2\eta^{3}\lambda}{\left(1-\eta \right)M}\right)\\
 &+h^{4}\frac{c\left(h\right)}{h}\left(\frac{4\eta^{3}}{\left(1 - \eta \right)^{2}} - \frac{2\eta^{3}}{\left(1- \eta \right)} + \frac{\eta^{2}\lambda}{M} - \frac{2\eta^{2}\lambda}{\left( 1 -\eta\right)M}\right)\\
 &\geq \frac{5h}{4M} + \left( -\frac{h}{6M} \right) + h^3 \left(- \frac{4}{\left( 1 -\eta \right)^{2}} - \frac{1}{96 \left(1 - \eta \right)^{2}} 
 - \frac{2}{\left(1 - \eta \right)}\right) - \frac{h^{3}}{12} > 0.
\end{align*}
Hence $AC - B^{2} \succ 0$ and our contraction results hold.
\end{proof}


\begin{proof}[Proof of Theorem \ref{Theorem:BAOAB}]
We first note that $\left(\mathcal{BAOAB}\right)^{n} = \mathcal{BAO}\left(\mathcal{ABAO}\right)^{n-1}\mathcal{AB}$. We will now focus our attention on proving contraction of $\mathcal{ABAO}$, by doing this we only have to deal with a single evaluation of the Hessian at each step. We will denote two synchronous realizations of ABAO as $\left(x_{j},v_{j}\right)$ and $\left(\Tilde{x}_{j}, \Tilde{v}_{j}\right)$ for $j \in \mathbb{N}$.
Now we will denote $\overline{x}_{j} := \left(\Tilde{x}_{j} - x_{j}\right)$, $\overline{v}_{j} = \left(\Tilde{v}_{j} - v_{j}\right)$ and $\overline{z}_{j} = \left(\overline{x}_{j}, \overline{v}_{j}\right)$, where $\overline{z}_{j} = \left(\overline{x}_{j}, \overline{v}_{j}\right)$ for $j = n,n +1$ for $n \in \mathbb{N}$. We have the following update rule for $\overline{z}_{n}$
\begin{align*}
    \overline{x}_{n+1} &= \overline{x}_{n} + h \overline{v}_{n} - \frac{h^{2}}{2}Q\left(\overline{x} + \frac{h}{2}\overline{v}\right),\quad \overline{v}_{n+1} = \eta\overline{v}_{n} - h\eta Q\left(\overline{x} + \frac{h}{2}\overline{v}\right),
\end{align*}
where by mean value theorem we define $Q = \int^{1}_{t = 0}\nabla^{2}U(\Tilde{x}_{n} + \frac{h}{2}\Tilde{v}_{n} + t(x_{n} - \Tilde{x}_{n} + \frac{h}{2}(v_{n} - \Tilde{v}_{n})))dt$,
then $\nabla U\left(\Tilde{x}_{n} + \frac{h}{2}\Tilde{v}_{n}\right) - \nabla U\left(x_{n} + \frac{h}{2}v_{n}\right) = Q \left(\overline{x} + \frac{h}{2}\overline{v}\right)$.
In the notation of (\ref{eq:contraction_matrix}) we have that for this scheme
\begin{align*}
    A &= -c\left(h\right)I_{d} + \left(2 b \eta h + h^2\right) Q + \left(-a \eta^2 h^2 - b \eta h^3 - 
    \frac{1}{4} h^4\right) Q^2,\\
    B &= \left(b\left(1 - \eta\right) - h - 
 b c\left(h\right)\right)I_{d} + \left(a \eta^2 h + 2 b \eta h^2 + 
    \frac{3}{4} h^3\right) Q\\
    &+ \left(-\frac{1}{2} a \eta^2 h^3 - \frac{1}{2} b \eta h^4 - \frac{1}{8} h^5\right) Q^2,\\
    C &= \left(a\left(1 - \eta^{2}\right) - 2 b \eta h - h^2 - 
 a c\left(h\right)\right)I_{d} + \left(a \eta^2 h^2 + \frac{3}{2} b \eta h^3 + 
    \frac{1}{2} h^4\right) Q \\
    &+ \left(-\frac{1}{4} a \eta^2 h^4 - \frac{1}{4} b \eta h^5 - \frac{1}{16} h^6\right) Q^2,
\end{align*}
where $\eta = \exp{\{-\gamma h\}}$. For our choice of $a$ and $b$, $B$ simplifies to 
$
     B = -b c\left(h\right) + \left(a \eta^2 h + 2 b \eta h^2 + 
    \frac{3}{4} h^3\right) Q + \left(-\frac{1}{2} a \eta^2 h^3 - \frac{1}{2} b \eta h^4 - \frac{1}{8} h^5\right) Q^2.
$
Now it is sufficient to prove that $A \succ 0$ and that $C - BA^{-1}B \succ 0$, noting that $A$, $B$ and $C$ commute as they are all polynomials in $Q$; it is sufficient to prove that $A \succ 0$ and $AC - B^{2} \succ 0$. First considering $A$ we have 
 that $A$ is symmetric and hence it is positive definite if and only if all its eigenvalues are positive. We note that the eigenvalues of $A$ are precisely
\begin{align*}
     P_{A}\left(\lambda\right) &= h \left( - \frac{c\left(h\right)}{h} + (2 b \eta  + h) \lambda + (-a \eta^2 h - b \eta h^2 - 
    \frac{1}{4} h^3) \lambda^2 \right)\\
     &\geq h\lambda\left(\frac{7b\eta}{4} +\frac{3h}{4} - \eta^{2}h  - b\eta h^{2}M - \frac{h^{3}M}{4}\right)\geq h\lambda\left(\frac{b\eta}{2} + \frac{3h}{4} - \frac{h}{16}\right) > 0,
 \end{align*}
 where $\lambda$ are the eigenvalues of $Q$, where $m \leq \lambda \leq M$ and we have used the fact that $h < \frac{1 - \eta}{\sqrt{4M}}$ and $b\geq h$. 
Hence we have that $A \succ 0$. Now it remains to prove that $AC - B^{2} \succ 0$, now we have that $AC - B^{2}$ is a polynomial of $Q$, which we denote $P_{AC - B^{2}}(Q)$ and hence has eigenvalues dictated by the eigenvalues $\lambda$ of $Q$. Now considering


\begin{align*}
    &\frac{P_{AC - B^{2}}\left(\lambda\right)}{h\lambda} =  \frac{\left(\eta^2 -1\right) c\left(h\right)}{h M \lambda} + h \left( \frac{1 - \eta^{2}}{M} -\frac{\eta^{2}\lambda}{M^{2}}+ \frac{2 \eta\left(1 - \eta^{2}\right)}{\left(1 - \eta\right) M} + \frac{c\left(h\right)^{2}}{h^{2} M \lambda} \right)\\
    &+ h^{2}\frac{c\left(h\right)}{h} \left( - \frac{\left( 1 + \eta \right)^{2}}{M} + \frac{1 + \eta}{\left(1 - \eta \right)\lambda} + \frac{\eta^{2}\lambda}{M^{2}}\right)\\
    &+ h^3 \left(-1 - \frac{4\eta}{\left(1 - \eta\right)^{2}} - \frac{c\left(h\right)^{2}}{h^{2}\left(1 - \eta\right)^{2}\lambda} - \frac{\lambda}{4M} + \frac{3\eta^{2}\lambda}{4M} -\frac{\eta\lambda\left(1 - \eta^{2}\right)}{\left(1-\eta\right)M}\right)\\
 &+ h^{4}\frac{c\left(h\right)}{h}\left(1 + \frac{4\eta}{\left(1 - \eta\right)^{2}} + \frac{\lambda\left(1 +\eta^{2}\right)}{4M} + \frac{\eta\lambda}{M} \right) + h^{5}\lambda\left(\frac{3}{16} + \frac{\eta}{\left(1 - \eta\right)^{2}}\right)\\
 &+ h^{6}\frac{c\left(h\right)}{h}\left(-\frac{3\lambda}{16} - \frac{\eta\lambda}{\left(1 - \eta\right)^{2}}\right) \geq -\frac{\left(1 +\eta\right)h}{4M} + h\left(\frac{1 + 2\eta}{M}\right)\\
 &+ h^3 \left(-\frac{5}{4} - \frac{4\eta}{\left( 1 -\eta \right)^{2}} - \frac{1}{64 \left(1 - \eta\right)^{2}} - \eta\left(1 + \eta\right)\right) - \frac{h^{3}}{32} > 0, 
\end{align*}
where we have used the fact that $h < \frac{1- \eta}{\sqrt{4M}}$.
Hence $AC - B^{2} \succ 0$ and our contraction results hold. All computations can be checked using symbolic computing. We can bound the $\mathcal{AB}$ operator on $||\cdot ||_{a,b}$ by
\begin{align*}
    &||\Psi_{\rm{AB}}\left(\Tilde{x}_{n},\Tilde{v}_{n},h/2\right) - \Psi_{\rm{AB}}\left(x_{n},v_{n},h/2\right)||^{2}_{a,b} \leq \\
    &3\left(\left(1 + \frac{ah^{2}M^{2}}{2}\right) ||\overline{x}_{n}||^{2} + \left(a + \frac{h^{2}}{4} + \frac{ah^{4}M^{2}}{8}\right)||\overline{v}_{n}||^{2} \right) \leq 7||\overline{x}_{n},\overline{v}_{n}||^{2}_{a,b},
\end{align*}
where we have used the norm equivalence in Section \ref{Sec:Quadratic_Norm}. We can also bound 
\begin{align*}
    &||\Psi_{\rm{O}}\left(\Psi_{\rm{BA}}(\Tilde{x}_{n},\Tilde{v}_{n},h/2),h)\right) - \Psi_{\rm{O}}\left(\Psi_{\rm{BA}}(x_{n},v_{n},h/2),h\right)||^{2}_{a,b}\\
    &\leq 3\left(\left(1 + \frac{ah^{2}M^{2}}{4} + \frac{h^{4}M^{2}}{8} \right) ||\overline{x}_{n}||^{2} + \left(\frac{h^{2}}{2} + a \right)||\overline{v}_{n}||^{2} \right)\leq 7 ||\overline{x}_{n},\overline{v}_{n}||^{2}_{a,b}.
\end{align*}
Combining these estimates we have the required result.
\end{proof}


\begin{proof}[Proof of Theorem \ref{Theorem:OBABO}]

We first note that $\left(\mathcal{OBABO}\right)^{n} = \mathcal{OB}\left(\mathcal{ABOB}\right)^{n-1}\mathcal{ABO}$. We will now focus our attention on proving contraction of $\mathcal{ABOB}$. Note we only have to deal with a single evaluation of the Hessian at each step as the position variable is not updated between gradient evaluations. We will denote two synchronous realisations of ABOB as $\left(x_{j},v_{j}\right)$ and $\left(\Tilde{x}_{j}, \Tilde{v}_{j}\right)$ for $j \in \mathbb{N}$.
Now we will denote $\overline{x}_{j} := \left(\Tilde{x}_{j} - x_{j}\right)$, $\overline{v}_{j} = \left(\Tilde{v}_{j} - v_{j}\right)$ and $\overline{z}_{j} = \left(\overline{x}_{j}, \overline{v}_{j}\right)$, where $\overline{z}_{j} = \left(\overline{x}_{j}, \overline{v}_{j}\right)$ for $j = n,n +1$ for $n \in \mathbb{N}$. We have the following update rule for $\overline{z}_{n}$
\begin{align*}
    \overline{x}_{n+1} &= \overline{x}_{n} + h \overline{v}_{n}, \quad \overline{v}_{n+1} = \eta\overline{v}_{n} - \frac{h}{2}(\eta + 1) Q\left(\overline{x} + h\overline{v}\right),
\end{align*}
where by mean value theorem we define $Q = \int^{1}_{t = 0}\nabla^{2}U(\Tilde{x}_{n} + h\Tilde{v}_{n} + t(x_{n} - \Tilde{x}_{n} + h(v_{n} - \Tilde{v}_{n})))dt$, then $\nabla U\left(\Tilde{x}_{n} + h\Tilde{v}_{n}\right) - \nabla U\left(x_{n} + hv_{n}\right) = Q \left(\overline{x} + h\overline{v}\right)$. In the notation of (\ref{eq:contraction_matrix}) we have that for this scheme
\begin{align*}
    A &= -c\left(h\right)I_{d} + bh\left(1 + \eta\right) Q - (1 + \eta)^{2}\frac{ah^{2}Q^{2}}{4},\\
    B &= \left(b\left(1 - \eta\right) - h - 
 b c\left(h\right)\right)I_{d} + \left(\frac{1}{2}a\eta + bh\right) \left(\eta + 1\right)hQ - \left(\eta +1\right)^{2}\frac{ah^{3}}{4} Q^2,\\
    C &= \left(a\left(1 - \eta^{2}\right) - 2 b \eta h - h^2 - 
 a c\left(h\right)\right)I_{d} + \left(a\eta + bh\right)\left(\eta + 1\right) h^{2}Q - a\left(\eta + 1 \right)^{2}\frac{h^{4}}{4} Q^2,
\end{align*}
where $\eta = \exp{\{-\gamma h\}}$. This form motivates the choice $b = \frac{h}{1 -\eta}$ and $a = \frac{1}{M}$ inspired by the continuous dynamics. For our choice of $a$ and $b$, $B$ simplifies to 
$
     B = - 
 b c\left(h\right) + (\frac{1}{2}a\eta + bh) (\eta + 1)hQ - (\eta +1)^{2}\frac{ah^{3}}{4} Q^2.
$
We will now apply Proposition \ref{Prop:PD}, first considering $A$ we have that the eigenvalues are precisely 
 \begin{align*}
     P_{A}\left(\lambda\right) &= -c\left(h\right) + bh(1 + \eta) \lambda - (1 + \eta)^{2}\frac{ah^{2}\lambda^{2}}{4} \geq h\lambda \left(\frac{3b}{4} + b \eta - \frac{h}{4}  - \frac{3b\eta}{4}\right)>0,
 \end{align*}
 where $\lambda$ are the eigenvalues of $Q$, where $m \leq \lambda \leq M$ and we have used the fact that $b \geq h$.
 Hence we have that $A \succ 0$. Now it remains to prove that $AC - B^{2} \succ 0$, now we have that $AC - B^{2}$ is a polynomial of $Q$, which we denote $P_{AC - B^{2}}(Q)$ and hence has eigenvalues dictated by the eigenvalues $\lambda$ of $Q$. Now considering 
\begin{align*}
    &\frac{P_{AC - B^{2}}\left(\lambda\right)}{h\lambda} =  \frac{\left(\eta^2 -1\right) c\left(h\right)}{h M \lambda} + h \left( \frac{\left(1 + \eta\right)^{2}}{M} + \frac{c\left(h\right)^{2}}{h^2 M \lambda} - \frac{\left(1 + \eta\right)^{2}\lambda}{4M^{2}}\right)\\
    &+ h^{2}\frac{c\left(h\right)}{h} \left( -\frac{\left(1 + \eta\right)^{2}}{M} - \frac{1}{\lambda} + \frac{2}{\left(1 - \eta \right)\lambda} + \frac{\lambda\left(1 + \eta \right)^{2}}{4 M^{2}}\right)\\
    &+  h^3 \left(-\frac{\left(1 + \eta\right)^{2}}{\left(1 - \eta\right)^{2}} - \frac{c\left(h\right)^{2}}{\left(1 - \eta\right)^{2}h^{2}\lambda} - \frac{\lambda \left(1 + \eta\right)^{2}}{4M} + \frac{\lambda\left(1 + \eta\right)^{2}}{2\left(1-\eta\right)M}\right)\\
 &+h^{4}\frac{c\left(h\right)}{h}\left(\frac{\left(1 +\eta\right)^{2}}{\left(1 - \eta \right)^{2}} + \frac{\lambda\left(1 +\eta\right)^{2}}{4M} - \frac{\lambda \left(1 + \eta\right)^{2}}{2M\left(1-\eta\right)}\right)\\
 &> -\frac{h\left(1 + \eta\right)}{4M} + h \left(\frac{3\left(1 + \eta\right)^{2}}{4M}\right) - h\left(\frac{3 \left(1 + \eta\right)^{2}}{64M} \right) + h\left(-\frac{\left(1 + \eta\right)^{2}}{4M} -\frac{1}{64M} \right) >0,
\end{align*}
where we have used the fact that $h < \frac{1 -\eta}{\sqrt{4M}}$. Hence $AC - B^{2} \succ 0$ and our contraction results hold. All computations can be checked using symbolic computing. We can bound the $\mathcal{ABO}$ operator on $||\cdot ||_{a,b}$ by 
\begin{align*}
    &||\Psi_{\rm{BO}}\left(\Psi_{\rm{A}}\left(\Tilde{x}_{n},\Tilde{v}_{n},h\right),h/2\right) - \Psi_{\rm{BO}}\left(\Psi_{\rm{A}}\left(x_{n},v_{n},h\right),h/2\right)||^{2}_{a,b}
    \leq 8 ||\overline{x}_{n},\overline{v}_{n}||^{2}_{a,b},
\end{align*}
where we have used the norm equivalence in Section \ref{Sec:Quadratic_Norm}. Similarly, we can also bound
\begin{align*}
    ||\Psi_{\rm{OB}}\left(\Tilde{x}_{n},\Tilde{v}_{n},h/2\right) - \Psi_{\rm{OB}}\left(x_{n},v_{n},h/2\right)||^{2}_{a,b} &
    \leq 6 ||\overline{x}_{n},\overline{v}_{n}||^{2}_{a,b}.
\end{align*}
Combining these estimates we have the required result.
\end{proof}


\begin{proof}[Proof of Theorem \ref{Theorem:SES}]
We remark that synchronous coupling between two realisations of the stochastic Euler scheme results in a synchronous coupling of $\left(\zeta_{n},\omega_{n}\right)_{n \in \mathbb{N}}$. Now we will denote $\overline{x}_{j} := \left(\Tilde{x}_{j} - x_{j}\right)$, $\overline{v}_{j} = \left(\Tilde{v}_{j} - v_{j}\right)$ and $\overline{z}_{j} = \left(\overline{x}_{j}, \overline{v}_{j}\right)$, where $\overline{z}_{j} = \left(\overline{x}_{j}, \overline{v}_{j}\right)$ for $j = n,n +1$ for $n \in \mathbb{N}$. We have the following update rule for $\overline{z}_{n}$
\begin{align*}
    \overline{x}_{n+1} = \overline{x}_{n} + \frac{1 - \eta}{\gamma}\overline{v}_{n} - \frac{\gamma h + \eta - 1}{\gamma^{2}}Q\overline{x}_{n}, \quad \overline{v}_{n+1} = \eta \overline{v}_{n} - \frac{1 - \eta}{\gamma}Q\overline{x}_{n},
\end{align*}
where by mean value theorem we define $Q = \int^{1}_{t = 0}\nabla^{2}U(\Tilde{x}_{n}  + t(x_{n} - \Tilde{x}_{n}))dt$, then $\nabla U\left(\Tilde{x}_{n}\right) - \nabla U\left(x_{n}\right) = Q \left(\overline{x} + h\overline{v}\right)$.  In the notation of (\ref{eq:contraction_matrix}) we have that for this scheme
\begin{align*}
    A &= -c\left(h\right)I_{d} + 2\left(\frac{b\left(1 - \eta\right)}{\gamma} + \frac{\eta - 1 + \gamma h}{\gamma^{2}}\right)Q\\
    &- \left(\frac{a\left( 1 - \eta \right)^{2}}{\gamma^{2}} + \frac{2b\left( 1-\eta\right)\left(-1 + \eta + \gamma h \right)}{\gamma^{3}} + \frac{\left(-1 + \eta + \gamma h \right)^{2}}{\gamma^{4}}\right) Q^{2},\\
    B &= \left(b\left(1 - \eta\right) - \frac{\left(1 - \eta\right)}{\gamma} - bc\left(h\right)\right)I_{d}\\
    &+ \left(\frac{a\eta\left(1- \eta\right)}{\gamma} + \frac{b\left(1 - \eta\right)^{2}}{\gamma^{2}} + \frac{b\eta\left(-1 + \eta + \gamma h\right)}{\gamma^{2}}  + \frac{\left(1 - \eta \right)\left(-1 + \eta + \gamma h \right)}{\gamma^{3}}\right)Q,\\
    C &= \left(a (1 - \eta^{2})
 -ac\left(h\right) - \frac{2b\eta \left(1 - \eta\right)}{\gamma} - \frac{\left( 1 - \eta\right)^{2}}{\gamma^{2}}\right)I_{d},
  \end{align*}
where $\eta = \exp{\{-\gamma h\}}$. This form motivates the choice $b = \frac{1}{\gamma}$ and $a = \frac{1}{M}$ inspired by the continuous dynamics. For our choice of $a$ and $b$, $B$ simplifies to $B = -bc\left(h\right) + \mathcal{O}\left(Q\right)$.
We will now apply Proposition \ref{Prop:PD}, first considering $A$ we wish to show that all its eigenvalues are positive which are precisely 
 \begin{align*}
 P_{A}(\lambda) &:= -c\left(h\right) + 2\left(\frac{b\left(1 - \eta\right)}{\gamma} + \frac{\eta - 1 + \gamma h}{\gamma^{2}}\right)\lambda \\
 &- \left(\frac{a\left( 1 - \eta \right)^{2}}{\gamma^{2}} + \frac{2b\left( 1-\eta\right)\left(-1 + \eta + \gamma h \right)}{\gamma^{3}} + \frac{\left(-1 + \eta + \gamma h \right)^{2}}{\gamma^{4}}\right) \lambda^{2},\\
 &\geq h\lambda \left(\frac{7}{4\gamma} - \left(h + \frac{h^{2}M}{\gamma} + \frac{h^{3}M}{4}\right)\right) > 0,
 \end{align*}
 where $\lambda$ are the eigenvalues of $Q$, where $m \leq \lambda \leq M$ and using the fact that, $\gamma^{2} \geq 4M$, $1 - \eta \leq h\gamma$, $h\gamma + \eta - 1 \leq \frac{\left(h\gamma\right)^{2}}{2}$ and $h<\frac{1}{2\gamma}$.
 Hence we have that $A \succ 0$. Now it remains to prove that $AC - B^{2} \succ 0$, now we have that $AC - B^{2}$ is a polynomial of $Q$, which we denote $P_{AC - B^{2}}(Q)$ and hence has eigenvalues dictated by the eigenvalues $\lambda$ of $Q$. Since the terms are more complicated than the previous discretizations we choose a convenient way of expanding the expression which can obtain positive definiteness. That is to expand the expression in terms of $a$. By using symbolic computing one can show that $P_{AC-B^{2}}(\lambda) = c_{0} + c_{1}a + c_{2}a^{2}$, where
\begin{align*}
    &c_{1} + c_{2} a =  \left(\eta^{2} - 1\right) c\left(h\right) + c\left(h\right)^{2} + 2\left(1 - \eta^{2}\right)\left(\frac{b\left(1 - \eta\right)}{\gamma}  + \frac{-1 + \eta + \gamma h}{\gamma^{2}}\right)\lambda \\
    &+ \frac{2b\left(1 -\eta\right)\eta  c\left(h\right) \lambda}{\gamma}  - 2 \left( \frac{b\left(1 - \eta\right)}{\gamma} + \frac{-1 + \eta + \gamma h}{\gamma^{2}}\right)c\left(h\right)\lambda + \frac{\left( 1- \eta\right)^{4}\lambda^{2}}{\gamma^{4}}\\
    &-\frac{2\eta\left(1-\eta\right)^{2}\left(-1 + \eta + \gamma h\right)\lambda^{2}}{\gamma^{4}}  - \frac{2b\left(1 - \eta\right)\left(-1 + \eta + \gamma h \right) \lambda^{2}}{\gamma^{3}} \\
    &- \frac{\left(1 - \eta^{2} \right)\left(-1 + \eta +\gamma h\right)^{2}\lambda^{2}}{\gamma^{4}} + \frac{2b\left(1 - \eta\right)\left(-1 + \eta + \gamma h \right) c\left(h\right) \lambda^{2}}{\gamma^{3}} \\
    &+ \frac{\left(-1 + \eta + \gamma h \right)^{2}c\left(h\right)\lambda^{2}}{\gamma^{4}}+ a\left( -\frac{\left(1 - \eta\right)^{2}\lambda^{2}}{\gamma^{2}} + \frac{\left(1 - \eta\right)^{2}c\left(h\right)\lambda^{2}}{\gamma^{2}}\right)\\
    &\geq c_{1} - \frac{\left(1 - \eta\right)^{2}\lambda}{\gamma^{2}} + \frac{\left(1 - \eta\right)^{2}c\left(h\right)\lambda}{\gamma^{2}} \\
    &\geq \left(\eta^{2} - 1\right) c\left(h\right) + \left(1 - \eta^{2}\right)\left(\frac{2h}{\gamma} - \frac{1 - \eta}{\left(1 + \eta \right)\gamma^{2}}\right)\lambda + ...\\
    &\geq \lambda\left(\left(-\frac{h^{2}}{2}\right) + h^{2}\left(2 - \frac{1}{1 +\eta} \right) + ...\right)\\
    &> \lambda \left(\frac{h^{2}}{2} - \frac{h^2}{16}- \frac{h^{3}\gamma}{16} - \frac{h^{3}\gamma}{8} - \frac{h^{3}\gamma}{16}  \right) \geq \lambda \left(\frac{7h^{2}}{16} - \frac{h^{3}\gamma}{4} \right),
\end{align*}
where $h < \frac{1}{2\gamma}$, $\gamma^{2} \geq 8M \geq 8m$, $\frac{h\gamma}{2}\leq 1 - \eta \leq h\gamma$ and $h\gamma + \eta - 1 \leq \frac{\left(h\gamma\right)^{2}}{2}$. Further, we have that 
\begin{align*}
    &c_{0} = \frac{\left(1 - \eta \right)^{2}c\left(h\right)}{\gamma^{2}} + \frac{2b\left(1 - \eta \right) \eta  c\left(h\right)}{\gamma} - b^{2}c\left(h\right)^{2} - \frac{2b\left(1 - \eta\right)^{3}\lambda}{\gamma^{3}}\\
    &- \frac{2\left(1 - \eta\right)^{2}\left(-1 + \eta + \gamma h \right)\lambda}{\gamma^{4}} - \frac{4b^{2}\eta\left(1- \eta\right)^{2}\lambda}{\gamma^{2}} - \frac{4b\eta\left(1 - \eta\right)\left(-1 + \eta + \gamma h\right)\lambda}{\gamma^{3}}\\
    &-\frac{b^2 \eta ^2 \lambda^2 (\eta +\gamma h-1)^2}{\gamma^4} +\frac{2 b^2 (1-\eta )^2  c\left(h\right)
   \lambda}{\gamma^2} +\frac{2 b^2 \eta   c\left(h\right) \lambda (\eta +\gamma h-1)}{\gamma^2}\\
   &+\frac{2 b^2 \eta 
   (1-\eta )^2 \lambda^2 (\eta +\gamma h-1)}{\gamma^4}-\frac{b^2 (1-\eta )^4
   \lambda^2}{\gamma^4}+\frac{2 b (1-\eta ) c\left(h\right) \lambda (\eta +\gamma h-1)}{\gamma^3}\\
   &> \lambda \left(-\frac{2\left(h\gamma\right)^{3}}{\gamma^{4}} -\frac{\left(h\gamma\right)^{4}}{\gamma^{4}} -\frac{4\left(h\gamma\right)^{2}}{\gamma^{4}} -\frac{2\left(h\gamma\right)^{3}}{\gamma^{4}} - \frac{\left(h\gamma \right)^{4}\lambda}{4\gamma^{6}} - \frac{\left(h\gamma \right)^{4}\lambda}{\gamma^{6}}\right)> \lambda \left(-\frac{7h^{2}}{\gamma^{2}}\right),
\end{align*}
now we can combine this with the previous estimate and we have 
\[P_{AC - B^{2}}(\lambda) > \lambda \left( \frac{7h^{2}}{16M} - \frac{h^{3}\gamma}{4M}  - \frac{7h^{2}}{\gamma^{2}} \right) > h^2\lambda\left(\frac{5}{16M} - \frac{7}{\gamma^{2}}\right) \geq 0,\]
which is true when $\gamma \geq 5\sqrt{M}$. Hence $AC - B^{2} \succ 0$ and our contraction results hold. All computations can be checked using symbolic computing.
\end{proof}

\section{Asymptotic Bias of BAOAB}\label{Appendix:Bias}

\begin{proof}[Proof of Proposition \ref{prop:BAOAB_local_error}]
We start by estimating $\Delta_{x}$. We use the following Taylor expansion for the Hamiltonian dynamics
\begin{align*}
    \Psi_{\textnormal{H}}(z,h) 
    &= \left(x + hv - \int^{h}_{0}\nabla U(x(t))(h-t)dt, v - h\nabla U(x) - \int^{h}_{0}\nabla^{2}U(x(t))v(t)(h-t)dt\right),
\end{align*}
we then define $(\overline{x}(t),\overline{v}(t))$ to be Hamiltonian dynamics initialised at $\Psi_{\textnormal{O}}(\Psi_{\textnormal{H}}(z,h/2),h)$ at time $t > 0$ and $(\overline{x},\overline{v}) := (\overline{x}(0),\overline{v}(0))$. Then the $x$-component of the HOH scheme is 
\begin{align*}
   x_{\textnormal{HOH}}& := x + \frac{h}{2}v(1 + \eta) - \int^{h/2}_{0}\nabla U(x(t))\left(\frac{h}{2}-t\right)dt - \int^{h/2}_{0}\nabla U(\overline{x}(t))\left(\frac{h}{2}-t\right)dt\\
   &+ \frac{h}{2}\left(\eta \left(- \frac{h}{2}\nabla U(x) - \int^{h/2}_{0}\nabla^{2}U(x(t))v(t)\left(\frac{h}{2}-t\right)dt\right) + \sqrt{1-\eta^{2}}\xi\right) 
\end{align*}
and the $x$-component of the BAOAB scheme is
\begin{align*}
   x_{\textnormal{BAOAB}} &:=x' + \frac{h}{2}v'(1+\eta) -\frac{h^{2}}{4}(1+\eta)\nabla U(x') + \frac{h}{2}\sqrt{1-\eta^{2}}\xi. 
\end{align*}
Therefore the difference $\Delta_{x} = x_{\textnormal{HOH}} - x_{\textnormal{BAOAB}}$ in $x$ satisfies
\begin{align*}
   &\|\Delta_{x}\|_{L^{2}} \leq \left(1 + \eta\frac{h^{2}}{4}M\right)\|x - x'\|_{L^{2}} + \frac{h}{2}(1+\eta)\|v-v'\|_{L^{2}}\\
   &+\Bigg\|\int^{h/2}_{0}\left(\nabla U(x(t)) - \nabla U(x')\right)\left(\frac{h}{2}-t\right)dt - \eta\frac{h}{2}\int^{h/2}_{0}\nabla^{2}U(x(t))v(t)\left(\frac{h}{2}-t\right)dt \\
   &+ \int^{h/2}_{0}\left(\nabla U(\overline{x}(t)) - \nabla U(x')\right)\left(\frac{h}{2}-t\right)dt\Bigg\|_{L^{2}}.\\
   \intertext{We now bound the final expression by}
   &  M\int^{h/2}_{0}\left\|x(t) - x'\right\|_{L^{2}}\left(\frac{h}{2} - t\right)dt  + \frac{h\eta M}{2}\int^{h/2}_{0}\|v(t)\|_{L^{2}}\left(\frac{h}{2}-t\right)dt\\
   &+ M\int^{h/2}_{0}\left\|\overline{x}(t) - x'\right\|_{L^{2}}\left(\frac{h}{2} - t\right)dt,
\end{align*}
where we have that the second term is bounded by $\frac{h^{3}\eta M}{16}\sqrt{d}$ and considering the first term we have for $t \in [0,h/2]$
\begin{align*}
    \|x(t)-x'\|_{L^{2}} &\leq \|x - x'\|_{L^{2}} + \|tv - \int^{t}_{0}\nabla U(x(s))(t-s)ds\|_{L^{2}} \leq \|x - x'\|_{L^{2}} +  \frac{3h\sqrt{d}}{4},
\end{align*}
similarly we can bound $M\int^{h/2}_{0}\left\|\overline{x}(t) - x'\right\|_{L^{2}}\left(\frac{h}{2} - t\right)dt$. Therefore we have by summing the estimates
\[
\|\Delta_{x}\|_{L^{2}} \leq \left(1 + (1+\eta)\frac{h^{2}}{4}M\right)\|x - x'\|_{L^{2}} + \frac{h}{2}(1+\eta)\|v-v'\|_{L^{2}} + \frac{3h^{3}M\sqrt{d}}{8},
\]
where we have used the fact that $h < \frac{1}{2\sqrt{M}}$.
Now we estimate $\Delta_{v}$, considering the velocity components we have that the $v$-component of the HOH scheme is
\begin{align*}
   v_{\textnormal{HOH}}&:=\eta\left(v-\frac{h}{2}\nabla U(x) - \int^{h/2}_{0}\nabla^{2}U(x(t))v(t)\left(\frac{h}{2} - t\right)dt\right) + \sqrt{1-\eta^{2}}\xi \\
   &- \frac{h}{2}\nabla U(\overline{x}) - \int^{h/2}_{0}\nabla^{2}U(\overline{x}(t))\overline{v}(t)\left(\frac{h}{2} - t\right)dt, 
\end{align*}
where
$
\overline{x}:= x + \frac{h}{2}v - \int^{h/2}_{0}\nabla U(x(t))\left(\frac{h}{2} - t\right)dt.
$
The $v$-component of BAOAB is
\[
v_{\textnormal{BAOAB}} := \eta (v'- \frac{h}{2}\nabla U(x')) + \sqrt{1-\eta^{2}}\xi - \frac{h}{2}\nabla U(\hat{x}),
\]
where
$
    \hat{x}:= x' + \frac{h}{2}(1+\eta)v' - \frac{h^{2}}{4}(1+\eta)\nabla U(x') + \frac{h}{2}\sqrt{1-\eta^{2}}\xi.
$
For $\Delta_{v} = v_{\textnormal{HOH}} - v_{\textnormal{BAOAB}}$ we have that 
\begin{align*}
    &\Delta_{v} = \eta(v - v') - \frac{h\eta}{2}\left(\nabla U(x) - \nabla U(x')\right)  -\eta\int^{h/2}_{0}\nabla^{2}U(x(t))v(t)\left(\frac{h}{2} - t\right)dt\\
    &-\frac{h}{2}\left(\nabla U(\overline{x}) - \nabla U(\hat{x})\right) - \int^{h/2}_{0}\nabla^{2}U(\overline{x}(t))\overline{v}(t)\left(\frac{h}{2} - t\right)dt,
\end{align*}
we now consider the Taylor expansion 
\begin{align*}
    &\nabla U(\hat{x}) = \nabla U(\hat{x}_{c})\\
    & -\nabla^{2}U([\hat{x}_{c},\hat{x}]) \left(x-x' + \frac{h}{2}(1+\eta)(v-v') - \frac{h^{2}}{4}(1+\eta)(\nabla U(x) -\nabla U(x'))\right),
\end{align*}
where we define 
\begin{align*}
    \hat{x}_{c} &:= x + \frac{h}{2}(1+\eta)v-\frac{h^{2}}{4}(1+\eta)\nabla U(x) + \frac{h}{2}\sqrt{1-\eta^{2}}\xi,\\
    \nabla^{2}U([v_{1},v_{2}]) &:= \int^{1}_{0}\nabla^{2}U(v_{1} + s(v_{2}-v_{1}))ds,
\end{align*}
for any $v_{1},v_{2} \in \mathbb{R}^{d}$.
We then define 
\begin{align*}
    \alpha_{v} &:=  \eta(v - v') - \frac{h\eta}{2}\left(\nabla U(x) - \nabla U(x')\right)\\
    &-\frac{h}{2}\nabla^{2}U([\hat{x}_{c},\hat{x}]) \left(x-x' + \frac{h}{2}(1+\eta)(v-v') - \frac{h^{2}}{4}(1+\eta)\left(\nabla U(x) -\nabla U(x')\right)\right)\\
    &=(\eta I_{d} - \frac{h^{2}(1+\eta)}{4}Q_{2})(v-v') +\left(-\frac{h\eta}{2}Q_{1} - \frac{h}{2}Q_{2} + \frac{h^{3}(1+\eta)}{8}Q_{2}Q_{1}\right)(x-x'),
\end{align*}
where $Q_{1} = \nabla^{2}U([x,x'])$ and $Q_{2} = \nabla^{2}U([\hat{x},\hat{x}_{c}])$. Consider $\Delta_{v} - \alpha_{v}$, which can be written as
\begin{align*}
    &-\eta\int^{h/2}_{0}\nabla^{2}U(x(t))v(t)\left(\frac{h}{2} - t\right)dt +\frac{h}{2}\nabla^{2}U([\hat{x}_{c},\overline{x}])(\hat{x}_{c}-\overline{x}) \\
    &- \int^{h/2}_{0}\nabla^{2}U(\overline{x}(t))\overline{v}(t)\left(\frac{h}{2} - t\right)dt,
\end{align*}
which only contains terms from the continuous dynamics. Removing some third order terms which we can bound in $L^{2}$ by $h^{3}M^{3/2}\sqrt{d}$, where we have used that $h<\frac{1}{2\sqrt{M}}$, we have the additional terms are given by 
\begin{align*}
    &-\eta\int^{h/2}_{0}\nabla^{2}U(x(t))v\left(\frac{h}{2} - t\right)dt +\frac{h}{2}\nabla^{2}U([\hat{x}_{c},\overline{x}])\left(\frac{h}{2}\eta v + \frac{h}{2}\sqrt{1-\eta^{2}}\xi\right)\\
    &- \int^{h/2}_{0}\nabla^{2}U(\overline{x}(t))\left(\eta\Tilde{v} + \sqrt{1-\eta^{2}}\xi\right)\left(\frac{h}{2} - t\right)dt,
\end{align*}
where $\Tilde{v}:= v - \frac{h}{2}\nabla U(x) -\int^{h/2}_{0}\nabla^{2}U(x(t))v(t)\left(\frac{h}{2} - t\right)dt$ and we can bound in $L^{2}$ 
\begin{align*}
    &\Bigg\|\eta\int^{h/2}_{0}\nabla^{2}U(x(t))v\left(\frac{h}{2} - t\right)dt - \eta\frac{h^{2}}{4} \nabla^{2}U([\hat{x}_{c},\overline{x}]) v + \eta\int^{h/2}_{0}\nabla^{2}U(\overline{x}(t)) \Tilde{v} \left(\frac{h}{2} - t\right) dt\Bigg\|_{L^{2}}\\
    &\leq  \frac{\eta h^{3}M^{3/2}\sqrt{d}}{8} + \eta\Bigg(\left\|\int^{h/2}_{0}\left(\nabla^{2}U(x(t)) - \nabla^{2}U([\hat{x}_{c},\overline{x}])\right)v \left(\frac{h}{2} - t\right) dt\right\|_{L^{2}} \\
    &+ \left\|\int^{h/2}_{0}\left(\nabla^{2}U(\overline{x}(t)) - \nabla^{2}U([\hat{x}_{c},\overline{x}])\right)v\left(\frac{h}{2} - t\right)dt\right\|_{L^{2}}\Bigg),
\end{align*}
and you can bound the second term under Assumption \ref{assum:H_Lipschitz} by
\begin{align*}
    &\eta M_{1}\int^{h/2}_{0}\int^{1}_{0}\left\|\left\|x(t)-\hat{x}_{c} - s(\overline{x}-\hat{x}_{c})\right\|\left\|v\right\|\right\|_{L^{2}}\left(\frac{h}{2} - t\right)dsdt\\
    &\leq \eta M_{1}\int^{h/2}_{0}\left\|\left(2h\|v\|+\frac{3}{2}h^{2}\sqrt{Md} + h \sqrt{1-\eta^{2}}\|\xi\|\right)\left\|v\right\|\right\|_{L^{2}}\left(\frac{h}{2} - t\right)dt \leq \frac{\eta h^{3}M_{1}d}{2}
\end{align*}
and similarly for the third term. Without Assumption \ref{assum:H_Lipschitz}, we can bound each of these terms by $\eta \frac{h^{2}M\sqrt{d}}{4}$.
The remaining terms we have are 
\begin{equation}\label{eq:noise_terms}
    \sqrt{1-\eta^{2}}\frac{h^{2}}{4}\nabla^{2}U([\hat{x}_{c},\overline{x}])\xi -  \sqrt{1-\eta^{2}}\int^{h/2}_{0}\nabla^{2}U(\overline{x}(t))\left(\frac{h}{2} - t\right)\xi dt,
\end{equation}
which can be bounded by $3h^{2}M\sqrt{d}/8$ in $L^{2}$. For higher order estimates we have that the $\nabla^{2}U$ terms contain noise and hence we use the Taylor expansions 
\begin{align*}
    &\nabla^{2}U(\overline{x}(t)) = \nabla^{2}U(\overline{x}) +\nabla^{3}U([\overline{x}(t),\overline{x}]) \left(t\overline{v} - \int^{t}_{0}\nabla U(\overline{x}(s))(t-s)ds\right), \\
    &\nabla^{2}U([\hat{x}_{c},\overline{x}]) = \int^{1}_{0}\nabla^{2} U(\hat{x} + s(\overline{x} - \hat{x}) - (1-s)\frac{h}{2}\sqrt{1-\eta^{2}}\xi)ds \\
    &+ \frac{h}{2}\sqrt{1-\eta^{2}}\int^{1}_{0}\nabla^{3}U([\hat{x} + s(\overline{x}-\hat{x}),\hat{x} + s(\overline{x} - \hat{x}) - (1-s)\frac{h}{2}\sqrt{1-\eta^{2}}\xi])(1-s)\xi ds.
\end{align*}
Therefore we have \eqref{eq:noise_terms} is of the form $\sqrt{1-\eta^{2}}h^{2}A(x,x')\xi + r_{h}$, where 
\[
A(x,x') = \frac{1}{8}\left(2\int^{1}_{0}\nabla^{2} U(\hat{x} + s(\overline{x} - \hat{x}) - (1-s)\frac{h}{2}\sqrt{1-\eta^{2}}\xi)ds - \nabla^{2}U(\overline{x})\right)
\]
is independent of $\xi$,
$
\|A(x,x')\xi\|_{L^{2}} \leq \frac{3}{8}M\sqrt{d},
$
and 
$ \|r_{h}\|_{L^{2}} \leq h^{3}M_{1}d.$
Combining all the estimates we have
\begin{align*}
    \Delta_{v} &= \left(\eta I_{d} - \frac{h^{2}(1+\eta)}{4}Q_{2}\right)(v-v') +\left(-\frac{h\eta}{2}Q_{1} - \frac{h}{2}Q_{2} + \frac{h^{3}(1+\eta)}{8}Q_{2}Q_{1}\right)(x-x') \\
    &+ \epsilon_{v} + h^{2}\sqrt{1-\eta^{2}}A(x,x')\xi,
\end{align*}
where $\|\epsilon_{v}\|_{L^{2}} \leq 2h^{3}\sqrt{d}(M^{3/2}+M_{1}\sqrt{d})$ under Assumptions \ref{assum:G_Lipschitz}-\ref{assum:H_Lipschitz} and $\|\epsilon_{v}\|_{L^{2}} \leq 2h^{2}M\sqrt{d}$ with $A(x,x') = 0$ under Assumptions \ref{assum:G_Lipschitz}-\ref{assum:convex}.
\end{proof}


We will use the following proposition to control the evolution of the error between BAOAB and HOH steps.
\begin{proposition}\label{prop:kernel_estimate}
Consider an HOH scheme, $(x_{i},v_{i})_{i\in \mathbb{N}}$ and a BAOAB scheme $(x'_{i},v'_{i})_{i\in \mathbb{N}}$ initialized at $(x_{0},v_{0}) = (x'_{0},v'_{0}) = (x,v)\sim \pi$ in $\mathbb{R}^{2d}$ with synchronously coupled Gaussian increments and stepsize $h<\frac{1-\eta}{2\sqrt{M}}$, for $l \in \mathbb{N}$ we define $(\Delta^{l}_{x},\Delta^{l}_{v}) :=  (x_{l}-x'_{l},v_{l}-v'_{l})$. For $a = 1/M$ and $b = h/(1-\eta)$ we have that under Assumptions \ref{assum:G_Lipschitz}-\ref{assum:convex}
\[
\|(\Delta^{l}_{x},\Delta^{l}_{v})\|_{L^{2},a,b} \leq \sqrt{3}e^{(l-1)5h\sqrt{M}}C_{l},
\]
where
$
C_{l} = \frac{3lh^{3}M\sqrt{d}}{8}+\frac{2h^{2}\sqrt{Md}}{1-\eta}.
$
Additionally, if Assumption \ref{assum:H_Lipschitz} is satisfied we have
\[
\|(\Delta^{l}_{x},\Delta^{l}_{v})\|_{L^{2},a,b}  \leq \sqrt{3}e^{(l-1)4h\sqrt{M}}C_{l},
\]
where
$
C_{l} := 3h^{3}\sqrt{d}\left(M + \frac{M_{1}}{\sqrt{M}}\sqrt{d}\right)l + \frac{3h^{2}\sqrt{Md}}{8}.
$
\end{proposition}
\begin{proof}[Proof of Proposition \ref{prop:kernel_estimate}]
Let $(x'_{0},v'_{0}) = (x_{0},v_{0}) \sim \pi$ and $(x_{i},v_{i})^{l}_{i=1}$ be defined by the HOH scheme with stepsize $h$ and friction parameter $\gamma$ and $(x'_{i},v'_{i})^{l}_{i=1}$ be defined by the BAOAB scheme with the same stepsize and friction parameter, we define these such that they have synchronously coupled Gaussian increments in the O steps. Then let us define $\Delta^{i}_{x} := x_{i}-x'_{i} $ and $\Delta^{i}_{v} := v_{i}-v'_{i}$ for $i \in \mathbb{N}$, we have by Proposition \ref{prop:BAOAB_local_error} that
\begin{align*}
   \|\Delta^{l}_{x}\|_{L^{2}} &\leq \|\Delta^{l-1}_{x}\|_{L^{2}} + h\left(2hM\|\Delta^{l-1}_{x}\|_{L^{2}} + \|\Delta^{l-1}_{v}\|_{L^{2}}\right) + \frac{3h^{3}M\sqrt{d}}{8}\\
   &\leq \sum^{l-1}_{i=1}h\left(2hM\|\Delta^{i}_{x}\|_{L^{2}} + \|\Delta^{i}_{v}\|_{L^{2}}\right) + l\frac{3h^{3}M\sqrt{d}}{8}.
\end{align*}
Without the additional Assumption \ref{assum:H_Lipschitz} we have
\begin{align*}
   \|\Delta^{l}_{v}\|_{L^{2}} &\leq (\eta + \frac{h^{2}M}{2})\|\Delta^{l-1}_{v}\|_{L^{2}} + hM\left(2 + \frac{h^{2}M}{4}\right)\|\Delta^{l-1}_{x}\|_{L^{2}}  + 2h^{2}M\sqrt{d}\\
   &\leq \sum^{l-1}_{i=1}\eta^{l-1-i}\frac{h}{2}\left(5M\|\Delta^{i}_{x}\|_{L^{2}} + hM\|\Delta^{i}_{v}\|_{L^{2}}\right) + \sum^{l}_{i=1}\eta^{l-i}2h^{2}M\sqrt{d},
\end{align*}
and therefore we have
$
\|(\Delta^{l}_{x},\Delta^{l}_{v})\|_{L^{2},a,0} \leq \sum^{l-1}_{i=1}5h\sqrt{M}\|(\Delta^{i}_{x},\Delta^{i}_{v})\|_{L^{2},a,0} + C_{l},
$
where
$
C_{l} = \frac{3lh^{3}M\sqrt{d}}{8}+\frac{2h^{2}\sqrt{Md}}{1-\eta}
$
and hence
\[
\|(\Delta^{l}_{x},\Delta^{l}_{v})\|_{L^{2},a,0}\leq e^{(l-1)5h\sqrt{M}}C_{l}.
\]
With Assumption \ref{assum:H_Lipschitz} we have
\begin{align*}
    \Delta^{l}_{v} &= \sum^{l-1}_{i=1}\eta^{l-1-i}\left(A^{i}_{v}\Delta^{i}_{v} + A^{i}_{x}\Delta^{i}_{x}\right)+  \sum^{l}_{i=1}\eta^{l-i}\epsilon^{i}_{v} + \sum^{l}_{i=1}\eta^{l-i}h^{2}\sqrt{1-\eta^{2}}A_{i}(x_{i},x'_{i})\xi_{i},
\end{align*}
where $\xi_{i}$ is the noise increment from the iteration $i$ of BAOAB, $A_{i}$ and $\epsilon^{i}_{v}$ are defined by Proposition \ref{prop:BAOAB_local_error} for each iteration and
$
   A^{i}_{x} := -\frac{h\eta}{2}Q^{i}_{1} - \frac{h}{2}Q^{i}_{2} + \frac{h^{3}(1+\eta)}{8}Q^{i}_{2}Q^{i}_{1},
$ $   
   A^{i}_{v} := - \frac{h^{2}(1+\eta)}{4}Q^{i}_{2},
$
where $Q^{i}_{1}$ and $Q^{i}_{2}$ are defined by Proposition \ref{prop:BAOAB_local_error} for each $i$.
Therefore
\begin{align*}
    &\|\Delta^{l}_{v}\|_{L^{2}} \leq \sum^{l-1}_{i=1}\frac{h^{2}M}{2} \|\Delta^{i}_{v}\|_{L^{2}} + 2hM \|\Delta^{i}_{x}\|_{L^{2}} + 2h^{3}\sqrt{d}\left(M^{3/2} + M_{1}\sqrt{d}\right)l + h^{2}\frac{3M\sqrt{d}}{8},
\end{align*}
where we only lose an order of $1/2$ in the last term due to the independence of the Gaussian increments, more precisely for $i \neq j$ and without loss of generality assume $i > j$ we have
$
   \mathbb{E}\left\langle A_{i}\xi_{i},A_{j}\xi_{j}\right\rangle = \mathbb{E}_{\xi_{j}}\mathbb{E}\left[\left\langle A_{i}\xi_{i},A_{j}\xi_{j}\right\rangle \mid \xi_{j}\right] = 0.
$
We therefore have that
\begin{align*}
    \|(\Delta^{l}_{x},\Delta^{l}_{v})\|_{L^{2},a,0} &\leq \sum^{l-1}_{i=1}4h\sqrt{M}\|(\Delta^{i}_{x},\Delta^{i}_{v})\|_{L^{2},a,0} + C_{l} \leq e^{(l-1)4h\sqrt{M}}C_{l},
\end{align*}
where
$
C_{l} := 3h^{3}\sqrt{d}\left(M + \frac{M_{1}}{\sqrt{M}}\sqrt{d}\right)l + \frac{3h^{2}\sqrt{Md}}{8}.
$
\end{proof}

The previous error estimate can be refined using our contraction results, hence we can avoid blowing up exponentially in $l$.
\begin{proof}[Proof of Proposition \ref{prop:second_kernel_estimate}]
We have $(\Delta^{l}_{x},\Delta^{l}_{v}) :=  (x_{l}-x'_{l},v_{l}-v'_{l})$, where $(x_{l},v_{l})$ corresponds to $l$ steps of HOH initiated in $(x_0,v_0)=(x,v)\sim \pi$, and $(x'_{l},v'_{l})$ corresponds to $l$ steps of BAOAB initiated in $(x_0',v_0')=(x,v)$,  driven by the same noise. 

We define a sequence of interpolating variants $(x^{(k)}_l,v^{(k)}_l)$ for every $k=0,1,\ldots, l$ as follows. First, $(x^{(k)}_0 ,v^{(k)}_0)=(x,v)$. We then  define $(x^{(k)}_i,v^{(k)}_i)_{i=1}^{k}$ as HOH steps, followed by $(x^{(k)}_i,v^{(k)}_i)_{i=k+1}^{l}$ as BAOAB steps. So we take $k$ HOH steps, followed by $l-k$ BAOAB steps. From the definition, it follows that
$(x^{(l)}_l,v^{(l)}_l)=(x_l,v_l)$ and 
$(x'^{(0)}_l,v'^{(0)}_l)=(x'_l,v'_l)$. We break $l$ steps into blocks of size $\tilde{l}=\left\lceil \frac{1}{2 \sqrt{M}h}\right\rceil$, as follows,
\begin{align*}
\|(\Delta^{l}_{x},\Delta^{l}_{v})\|_{L^{2},a,b} &= \left\|\left(x^{(0)}_{l}-x^{(l)}_{l},v^{(0)}_{l}-v^{(l)}_{l}\right)\right\|_{L^{2},a,b}\\
&\le \sum_{j=0}^{\lfloor l/\tilde{l} \rfloor -1}  
\left\|\left(x_{l}^{(j \tilde{l})}-x_{l}^{((j+1) \tilde{l})},v_{l}^{(j \tilde{l})}-v_{l}^{((j+1) \tilde{l})}\right)\right\|_{L^{2},a,b}\\
&+\left\|\left(x_{l}^{(\lfloor l/\tilde{l} \rfloor\tilde{l})}-x_{l}^{(l)},v_{l}^{(\lfloor l/\tilde{l} \rfloor \tilde{l})}-v_{l}^{(l)}\right)\right\|_{L^{2},a,b}.
\end{align*}
When we bound the terms $\left\|\left(x_{l}^{(j \tilde{l})}-x_{l}^{((j+1) \tilde{l})},v_{l}^{(j \tilde{l})}-v_{l}^{((j+1) \tilde{l})}\right)\right\|_{L^{2},a,b}$, we can use the fact that the first $j\tilde{l}$ according to HOH keep the stationary distribution invariant, and the two chains deviate in the following $\tilde{l}$ steps (since one of them is doing HOH, and the other one is doing BAOAB steps). Still, the remaining steps are doing BAOAB for both chains (hence, there is a contraction).
Using Proposition \ref{prop:kernel_estimate} with $l$ chosen as $\tilde{l}$, and Theorem \ref{Theorem:BAOAB}, we have
\begin{align*}
\left\|\left(x_{l}^{(j\tilde{l})}-x_{l}^{((j+1)\tilde{l})},v_{l}^{(j \tilde{l})}-v_{l}^{((j+1) \tilde{l})}\right)\right\|_{L^{2},a,b} \le \sqrt{3}e^{5/2} C_{\tilde{l}} \cdot 7(1-c(h))^{\frac{l-1-(j+1) \tilde{l}}{2}}.
\end{align*}
Under Assumptions \ref{assum:G_Lipschitz}-\ref{assum:convex}, $C_{\tilde{l}}=\frac{3\tilde{l}h^{3}M\sqrt{d}}{8}+\frac{2h^{2}\sqrt{Md}}{1-\eta}\le \frac{3h^{2}\sqrt{Md}}{1-\eta}$. If we also include Assumption \ref{assum:H_Lipschitz},
$C_{\tilde{l}} := 3h^{3}\sqrt{d}\left(M + \frac{M_{1}}{\sqrt{M}}\sqrt{d}\right)\tilde{l} + \frac{3h^{2}\sqrt{Md}}{8}\le h^2(4\sqrt{Md}+3\frac{M_1}{M} d)$. By some simple algebra, we have that
\begin{align*}
&\|\Delta^{l}_{x},\Delta^{l}_{v}\|_{L^{2},a,b}\le \sqrt{3} e^{5/2} C_{\tilde{l}}\left(1+ \frac{7}{1-(1-c(h))^{\tilde{l}/2}}\right)\le 170 C_{\tilde{l}}\cdot \frac{1}{1-e^{-c(h)\tilde{l}/2}}\\
&=  170 C_{\tilde{l}}\cdot \frac{1}{1-e^{-\frac{h^{2}m}{4\left(1 - \eta\right)}\tilde{l}/2}}\le 
170 C_{\tilde{l}}\cdot \frac{1}{1-e^{-\frac{h m}{8\sqrt{M}\left(1 - \eta\right)}}}\le 170 C_{\tilde{l}}\cdot \left(1+\frac{8\sqrt{M}\left(1 - \eta\right)}{h m}\right),
\end{align*}
where we have used the fact that $1/(1-e^{-x})\le 1+1/x$ for $x>0$. The claims now follow by rearrangement.
\end{proof}

\end{document}